\newtheorem{theorem}{Theorem}[section]
\newtheorem{lemma}[theorem]{Lemma}
\newtheorem{proposition}[theorem]{Proposition}
\newtheorem{corollary}[theorem]{Corollary}
\theoremstyle{definition}
\newtheorem{definition}[theorem]{Definition}
\theoremstyle{remark}
\newtheorem{observation}{Observation}
\newcommand{\ra}{\rightarrow}
\newcommand{\hra}{\hookrightarrow}
\providecommand{\inprod}[2]{\langle#1,#2\rangle}
\providecommand{\pbyp}[2]{\frac{\partial#1\,}{\partial#2\,}}
\newcommand{\Teich}{Teichm\"uller }
\def\ra{{\rightarrow}}
\def\bZ{{\mathbb Z}}
\def\bQ{{\mathbb Q}}
\def\mc{\mathcal}
\def\mf{\mathfrak}
\def\S1g{{\Sigma_{g,1}}}
\def\T1g{{\mathcal T}_{g,1}}
\def\I1g{{\mathcal I}_{g,1}}
\newcommand\Aut{\operatorname{Aut}}
\newcommand\Id{\operatorname{Id}}
\newcommand{\mb}{\mathbf}
\begin{document}

\title[Groupoid Extensions of Mapping Class Representations]
{Groupoid Extensions of Mapping Class Representations for Bordered Surfaces}

\author{J{\o}rgen Ellegaard Andersen}
\address{Center for the Topology and Quantization of Moduli Spaces\\Department of Mathematics\\
Aarhus University\\
DK-8000 Aarhus C, Denmark\\}
\email{andersen{\char'100}imf.au.dk}

\author{Alex James  Bene}
\address{Department of Mathematics\\
University of Southern California\\
Los Angeles, CA 90089\\
USA\\
~{\rm and}~Center for the Topology and Quantization of Moduli Spaces\\Department of Mathematics\\
Aarhus University\\
DK-8000 Aarhus C, Denmark\\}
\email{bene{\char'100}math.usc.edu}

\author{R. C. Penner}
\address{Departments of Mathematics and Physics/Astronomy\\
University of Southern California\\
Los Angeles, CA 90089\\
USA\\
~{\rm and}~Center for the Topology and Quantization of Moduli Spaces\\Department of Mathematics\\
Aarhus University\\
DK-8000 Aarhus C, Denmark\\}
\email{rpenner{\char'100}math.usc.edu}

\thanks{AJB and RCP are happy to acknowledge useful conversations
with Suzuki Masaaki, AJB is appreciative of  conversations with Matthew Day,  and all authors likewise thank Takuya Sakasai and Jean-Baptiste Meilhan.}

\keywords{mapping class group, Ptolemy groupoid,  Torelli group, automorphisms of free groups, Magnus representation, fatgraphs, ribbon graphs, chord diagrams, moduli spaces}

\subjclass{MSC 20F38, 05C25, 20F34, 57M99, 32G15, 14H10, 20F99}

\begin{abstract}
The mapping class group of a surface with one boundary component admits numerous
interesting representations including as a group of automorphisms of a free group and
as a group of symplectic transformations.   Insofar as the mapping class group can be
identified with the fundamental group of Riemann's moduli space, it
is furthermore identified with a subgroup of the fundamental path groupoid
upon choosing a basepoint.   A combinatorial model for this, the mapping class groupoid,  arises from
the invariant cell decomposition of Teichm\"uller space, whose fundamental path
groupoid is called the Ptolemy groupoid.  It is natural to try to extend representations of the
mapping class group to the  mapping class groupoid, i.e., construct a homomorphism from the mapping class groupoid to the same target that extends the given representations arising from various choices of basepoint.

Among others, we extend both aforementioned
representations to the groupoid level in this sense, where the
symplectic representation is lifted both rationally and integrally. 
The techniques of proof include several algorithms involving fatgraphs and chord diagrams.
The former extension is given by explicit formulae depending upon six essential cases,
and the kernel and image of the groupoid representation are computed.
Furthermore, this provides groupoid extensions of any representation of the
mapping class group that factors through its action on the
fundamental group of the surface including, for instance, the Magnus
representation and representations on the moduli spaces of flat
connections.
\end{abstract}

\maketitle

\section{Introduction}

Let $\S1g$ be a surface with genus $g\geq 1$ and one boundary
component, and let $\pi_1=\pi_1(\S1g,p)$ be its fundamental group
with respect to a basepoint $p$ lying on its
boundary $\partial\S1g$.   $\pi_1$ is non-canonically isomorphic to a free group
$F_{2g}$ on $2g$ generators, and the \emph{mapping class group}
$MC(\S1g)$ (i.e., the group of path components of the space of orientation-preserving
homeomorphisms fixing $\partial\S1g$  pointwise) 
acts on it in
a natural way.  In fact, it is a classical result \cite{nielsen} of
Nielsen that $MC(\S1g)$ can be identified with the subgroup of
$\Aut(\pi_1)$ which fixes the element 
of $\pi_1$ corresponding to $\partial\S1g$.

Following \cite{penner,penner98}, let us consider the Ptolemy
groupoid ${\mathfrak{Pt}}(\S1g)$, i.e., the combinatorial fundamental path
groupoid of \Teich space for $\S1g$,  where objects are suitable
equivalence classes of marked fatgraphs (trivalent except for one
univalent vertex, see the next section), and morphisms are given by
finite sequences of Whitehead moves connecting them (again, see the
next section).  In this way, any element of  $MC(\S1g)$ is
represented by a finite sequence of Whitehead moves starting from a
fixed trivalent fatgraph and ending on a combinatorially identical
fatgraph, where the sequence is uniquely determined up to known
relations.

Similarly, we define the mapping class groupoid $\mf{MC}(\S1g)$ and the Torelli groupoid ${\mathfrak{To}}(\S1g)$  to be the respective quotients of ${\mathfrak{Pt}}(\S1g)$ under the action of the mapping class group and the Torelli group $\mc{I}(\S1g)$ (i.e., the subgroup  of $MC(\S1g)$ acting trivially on the homology of $\S1g$).  Mapping classes are given by sequences of Whitehead moves beginning and ending at combinatorially identical fatgraphs, i.e., the same object of $\mf{MC}(\S1g)$, and elements of the Torelli group moreover preserve some, hence any, ``homology marking'' (as in \cite{moritapenner} and described at the end of Section \ref{sec:defns}).

By a \emph{groupoid representation}, we shall mean a map from a
groupoid to a group which respects
composition.  
 It is natural to ask whether  known representations of
the mapping class group $MC(\S1g)$ can be extended to representations
of $\mf{MC}(\S1g)$ or ${\mathfrak{Pt}}(\S1g)$, and in particular, one may wonder if  Nielsen's
embedding $N\colon MC(\S1g)\to Aut(F_{2g})$ extends to a groupoid 
representation.   In this paper (in Theorem \ref{thm:canAutlift}), we prove that the
answer is yes, and we give explicit formulae for our extension
$$\widehat N\colon  \mf{MC}(\S1g)\to Aut(F_{2g})$$ which are governed
by six essential cases of fatgraph combinatorics.    It is important
to remark  that Nielsen's embedding $N\colon MC(\S1g)\to Aut(F_{2g})$ is defined by the action of $MC(\S1g)$ on $\pi_1$ via an isomorphism $\pi_1\cong F_{2g}$ given by a  choice of generating set for $\pi_1$;  
 our
construction, 
 on the other hand, is canonical with target $Aut(F_{2g})$ and  relies on an algorithm which
canonically determines a generating set for $\pi_1(\S1g)$ by
constructing a maximal tree in each appropriate fatgraph (see the greedy algorithm in Section \ref{sec:greedy}).  The kernel and image of $\widehat{N}$ are computed
(in Propositions \ref{prop:kernel} and  \ref{image} respectively).
The automorphism group $Aut(\pi_1)$ acts on the
representation variety of $\pi_1$ in any group, hence so too do
$\mf{MC}(\S1g)$ and ${\mathfrak{Pt}}(\S1g)$.

It follows that representations of $MC(\S1g)$ which factor through the Nielsen embedding
$N\colon  MC(\S1g)\to Aut(\pi_1)$ also must extend to ${\mathfrak{Pt}}(\S1g)$.  In particular,
the Magnus representation (see Section 4)  $ MC({\S1g})\to Gl(2g,\bZ [\pi _1])$ extends to the groupoid
level
$${\mathfrak{Pt}}({\S1g})\to Gl(2g,\bZ [\pi _1]),$$
 and explicit formulae for this extension are also given.   The algorithm
here seems comparable in terms of complexity to existing algorithms \cite{morita,Suzuki02}
for the calculation of Magnus representations.

Utilizing further combinatorial algorithms, we obtain maps from the Ptolemy groupoid to various subgroups of $MC(\S1g)$ which can be considered as extensions of the appropriate   identity representations.  In particular, the extension $\widetilde{id}\colon {\mathfrak{Pt}}(\S1g)\ra MC(\S1g)$ of the identity representation of the mapping class group itself to the Ptolemy groupoid 
in Theorem \ref{thm:mclift} leads
 to a different representation ${\mathfrak{Pt}}(\S1g)\to Aut(\pi_1)$ as well as an 
  extension of  the symplectic representation $\tau _0\colon MC(\S1g )\to
Sp(H)\cong Sp(2g,\bZ)$  to a representation 
 $$\hat \tau _0\colon  \mf{MC}(\S1g) \to
Sp(2g, \bZ )$$ 
by explicit algorithms (in Corollary \ref{uglycor}).

As a general point, we remark that it is not surprising that these extensions exist, but rather that
they can be described fairly succinctly depending only upon six basic cases.  This same
feature will persist in other contexts as well, for instance in principle, an extension of the Meyer cocycle  \cite{meyer} to the groupoid
level should follow from the symplectic representation given here and further calculation.
We hope that the techniques of this paper might be generally useful in studying mapping class group representations.
See \cite{bamp} for extensions to the Ptolemy groupoid of the finite type invariants of 3-dimensional quantum topology, parts of which depend upon the algorithms developed here.
The extension of the present work to the setting of surfaces with several boundary components seems straight-forward, and we have restricted here to the case of surfaces with one
boundary component simply for convenience.

\section{Marked Bordered Fatgraphs}\label{sec:defns}

Given a graph $G$ (i.e., a finite connected 1-dimensional CW complex), let $\mathcal{E}_{or}(G)$ denote the set of oriented edges of $G$.  Given $\mb{e}\in\mathcal{E}_{or}(G)$,  let $\mb{\bar e}$ denote the same edge with the opposite orientation and let $v(\mb{e})$ denote the vertex to which $\mb{e}$ points.

A \emph{fatgraph} is a  graph  together with a  cyclic ordering of $\{ {\mb e}:v({\mb e})=v\}$ for each vertex $v$ of $G$.   This additional  structure gives rise to certain cyclically ordered sequences of oriented edges called the \emph{boundary cycles} of $G$, where  an oriented  edge $\mb{e}$   is followed by the next edge in the cyclic ordering at $v(\mb{e})$, but with the opposite orientation, so  that it points away from $v(\mb{e})$.  In depicting a fatgraph, we will always identify the cyclic ordering at a vertex with  the counterclockwise orientation of the plane, according to which we will represent the boundary cycle of $G$ as a path alongside it with $G$ on the left.    
 
 Any two consecutive oriented edges in the boundary cycle define a \emph{sector} of the fatgraph, and each sector $G$ can be associated to a unique vertex of $G$.   We say that a fatgraph $G$ with $n$ boundary cycles has \emph{genus} $g$ if its Euler characteristic is $\chi (G)=2-2g -n$.

An isomorphism between two fatgraphs is a bijection of edges and vertices which preserves the incidence relations of edges with vertices and the cyclic ordering at each vertex.  We shall always regard isomorphic fatgraphs as equivalent.

 A \emph{(once-)bordered} fatgraph is a fatgraph with only one boundary cycle such that all vertices are at least trivalent except for a unique univalent vertex.  A bordered fatgraph is ``rigid'' in the sense that any fatgraph automorphism is trivial.

There is a natural  linear ordering on the set $\mathcal{E}_{or}(G)$ of oriented edges of a bordered fatgraph $G$ obtained by setting $\mb{x}<\mb{y}$ if $\mb{x}$ appears before $\mb{y}$ while traversing the boundary cycle of $G$ beginning at the univalent vertex.    This provides each edge $e$ of $G$ with a preferred orientation, denoted simply by $\mb{e}\in\mathcal{E}_{or}(G)$, by requiring $\mb{e}<\mb{\bar e}$.  
  We call the edge incident to the univalent vertex  the \emph{tail} of $G$ and denote its preferred orientation by $\mb{t}$ so that $\mb{t}\leq \mb{x}$ for all $\mb{x}\in \mathcal{E}_{or}(G)$.

Given a trivalent bordered fatgraph $G$ and a non-tail edge $e$ of $G$, define the \emph{Whitehead move} on $e$ to be the collapse of $e$ followed by the unique distinct expansion of the resulting four-valent vertex.  (Any non-tail edge of $G$ necessarily has distinct endpoints since there is only one boundary cycle.)

There  is a natural composition on the set of Whitehead moves, where one Whitehead move $W\colon G_0\ra G_1$ can be composed with another $W'\colon G'_0\ra G'_1$ in the natural way if and only if $G_1=G'_0$.
\begin{definition}
As in \cite{penner,penner98}, the \emph{mapping class groupoid} $\mf{MC}(\S1g)$ of $\S1g$ is defined to be the set of  finite compositions of  Whitehead moves on  bordered fatgraphs  modulo the pentagon,  commutativity, and involutivity relations.
\end{definition}
$\mf{MC}(\S1g)$ can be identified with  the combinatorial  fundamental path groupoid of the dual cell decomposition of Riemann's moduli space of $\S 1g$ \cite{penner04}, and in this way, any element of the mapping class group $MC(\S1g )$ of $\S1g$ can be represented by a sequence of Whitehead moves  $\{W_i\colon G_{i-1}\ra G_i\}_{i=1}^k$ with $G_0=G_k$.

 \begin{definition}
 Fixing a point $q\neq p \in \partial\S1g$, a \emph{marking} of a   bordered fatgraph $G$ is an isotopy class of embeddings $f\colon G\hra \S1g$ such that the cyclic ordering at vertices of $G$ agrees
 with the  orientation of $\S1g$,  the complement $\S1g\backslash f(G)$ is contractible, and $f(G)\cap\partial\S1g=f(t)\cap\partial\S1g=\{q\}$.
 \end{definition}
 
Markings evolve unambiguously under Whitehead moves, and in this way, there is a natural composition on the set of  Whitehead moves acting on marked fatgraphs.

   \begin{definition}
Define the  \emph{Ptolemy groupoid} ${\mathfrak{Pt}}(\S1g )$ of $\S1g$ to be the set of finite sequences of composable Whitehead moves on genus $g$ marked bordered fatgraphs modulo the corresponding pentagon, commutativity, and involutivity relations, cf. \cite{moritapenner}.
  \end{definition}

As with the mapping class groupoid,  ${\mathfrak{Pt}}(\S1g )$ can be identified with a combinatorial version of the fundamental path groupoid of the \Teich space $\mathcal{T}_{g,1}$ of $\S1g$.  Since   $\mathcal{ T}_{g,1}$ is connected and simply connected, any two marked bordered fatgraphs are related  by a unique  element of  ${\mathfrak{Pt}}(\S1g)$, i.e., there is a sequence of Whitehead moves connecting the two which is uniquely determined modulo the pentagon, commutativity, and involutivity relations.

  The mapping class group $MC(\S1g )$  acts by post-composition on the set of markings of  $G$ in a  free and transitive manner, which directly corresponds to its free action on $\mathcal{ T}_{g,1}$.
  In this way, an element $\varphi$ of the mapping class group $MC(\S1g )$ is represented by any sequence of Whitehead moves 
  $\{W_i\colon (G_{i-1},f_{i-1})\ra (G_i,f_i)\}_{i=1}^k$ on marked fatgraphs $f_i:G_i\hra\S1g$, where
$(G_k,f_k)=(G_0, \varphi\circ f_0)$.

Fix a marking $f:G\hra\S1g$ of a fatgraph.  For each edge $e$ of $G$, there is a properly embedded ``dual'' arc, unique up to isotopy rel boundary, that meets $G$ only at a single transverse intersection point interior to $e$.  An orientation $\mb{e}$ on $e$ induces
an unambiguous orientation on its dual arc, where the pair of tangent vectors at the intersection point 
of the edge and arc in this order determine the positive orientation of the surface.  In this manner,
each marking of $G$ gives rise to a map $\pi_1\colon \mathcal{E}_{or}(G)\ra \pi_1$, which clearly satisfies the conditions of the next definition.

\begin{definition}
A \emph{geometric $\pi_1$-marking} of a  bordered fatgraph $G$ is a map $\pi_1\colon \mathcal{E}_{or}(G)\ra \pi_1$ which satisfies the following compatibility conditions:
\begin{itemize}
\item {\bf (edge)} we have
$\pi_1(\mb{e}) \pi_1(\mb{\bar e})=1$ for every oriented edge $\mb{e}\in \mathcal{E}_{or}(G)$;
\item {\bf (vertex)} we have
$
\pi_1(\mb{e}_1) \pi_1(\mb{ e}_2) \dotsm \pi_1(\mb{ e}_k)=1,
$
for every vertex $v$ of $G$, where $\mb{e}_1, \ldots, \mb{e}_k$ are the cyclically ordered oriented edges pointing towards $v$;
\item {\bf (surjectivity)} $\pi_1(\mathcal{E}_{or}(G))$ generates $\pi_1$;
\item {\bf (geometricity)} $
\pi_1(\mb{\bar t})~{\rm is~the~class~of~the~boundary}~\partial\S1g$.
\end{itemize}
\end{definition}

In fact, the two notions of marking are equivalent  \cite{bkp}, and we shall not distinguish between them in the sequel.  
Also for convenience, we shall henceforth denote $\pi_1(\mb{e})$ simply by $\mb{e}\in\pi_1$.

More generally, for any group $K$, we can define an \emph{abstract $K$-marking} of a fatgraph $G$ to be a map $\mathcal{E}_{or}(G)\ra K$ which satisfies the analogous edge and vertex  conditions, and we say that the $K$-marking is \emph{surjective} if the surjectivity condition is also satisfied.  By the compatibility conditions,  an abstract  $K$-marking evolves unambiguously under a Whitehead move, which moreover preserves surjectivity.

In particular, by composing a geometric $\pi_1$-marking with the abelianization homomorphism $\pi_1\ra H=H_1(\S1g,\bZ)$, one obtains what we call a \emph{geometric} $H$-marking of $G$,  which is a map $H\colon  \mathcal{E}_{or}(G)\ra H$ satisfying  the analogous abelian edge, vertex, and surjectivity conditions, as well as a geometricity condition which we now describe. This condition is expressed in terms of the 
skew pairing on $\mathcal{E}_{or}(G)$ given by 
$$\inprod{\mb{x}}{\mb{y}}=
\begin{cases}
-1,&~{\rm if}~\mb{x}<\mb{y}<\mb{\bar x}<\mb{\bar y};\\
\hskip 1.7ex 0,&~{\rm else};\\
+1,&~{\rm if}~\mb{x}<\mb{\bar y}<\mb{\bar x}<\mb{ y},\\
\end{cases}$$
where the conditions hold up to cyclic permutation along the boundary cycle, namely:

\begin{itemize}
\item {\bf ($H$-geometricity)} $\inprod{\mb{x}}{\mb{y}}=H({\mb x})\cdot H({\mb y})$ for all oriented edges $\mb{x},\mb{y}\in\mathcal{E}_{or}(G)$, where  $\cdot$ is the intersection pairing on $H$.
\end{itemize}

In fact,  a map $\mathcal{E}_{or}(G)\ra H$ is a geometric $H$-marking if and only if it satisfies the edge, vertex, surjectivity, and H-geometricity conditions \cite {bkp}.
Furthermore, $H$-markings evolve unambiguously under Whitehead moves and both the surjectivity and geometricity conditions are preserved under such moves.  

Following \cite{moritapenner}, we define the  \emph{Torelli groupoid} ${\mathfrak{To}}(\S1g )$ of $\S1g$ to be the set of finite sequences of Whitehead moves on geometrically $H$-marked genus $g$ bordered fatgraphs, together with the natural composition of sequences, modulo the corresponding pentagon, commutativity, and involutivity relations. 
The Torelli groupoid can be identified with the fundamental path groupoid of the Torelli cover of Riemann's moduli space corresponding to the kernel of the symplectic representation $\tau _0$, namely, the Torelli subgroup $\mc{I}(\S1g)$, again cf.  \cite{moritapenner}.

\section{The Greedy Algorithm}\label{sec:greedy}

In this section, we describe an algorithm for canonically determining a maximal tree in each bordered fatgraph.  

\begin{definition} {\bf (Greedy algorithm)} 
Define a subgraph $T_G$ of $G$ by $e\in T_G$ if $\mb{e}\leq \mb{x}$ for all  $\mb{x}\in\mathcal{E}_{or}(G)$ with $v(\mb{x})=v(\mb{e})$. 
 We call the linearly ordered set of oriented edges $\mb{X}_G=\{\mb{x}_i\}_{i=1}^{2g}$ determined by the complement $X_G=G\backslash T_G$ with its preferred orientations  the set of \emph{generators} for $G$.
\end{definition}

Note that there must  be at least  one and at most  two edges whose preferred orientations point  to a given trivalent vertex $v$, and these two cases correspond to whether the three sectors associated to $v$ are transversed in the counterclockwise or clockwise sense near $v$ along the boundary cycle.

\begin{lemma}
For each bordered fatgraph $G$, the subgraph $T_G$ is a maximal tree rooted by the tail of $G$.
\end{lemma}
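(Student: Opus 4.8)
The plan is to show that $T_G$ contains exactly one edge directed into each of the $V-1$ trivalent vertices, none directed into the univalent vertex, and that following these edges toward their sources strictly decreases position in the boundary cycle; this makes $T_G$ a connected rooted subgraph on all $V$ vertices, and a count shows it has $V-1$ edges, hence is a spanning tree.

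First I would record, for each vertex $v$, the minimal oriented edge $\mb{m}_v$ among all $\mb{x}\in\mathcal{E}_{or}(G)$ with $v(\mb{x})=v$; by definition $e\in T_G$ precisely when $\mb{e}=\mb{m}_{v(\mb{e})}$, i.e.\ when the preferred orientation of $e$ realizes this minimum. The key claim is that for every trivalent $v$ the minimal incoming edge $\mb{m}_v$ is already a preferred orientation. To see this I would use the sector description of the boundary cycle: the oriented edges pointing into $v$ are exactly the initial edges of the three sectors at $v$, and since each sector occupies a pair of consecutive positions (an incoming edge immediately followed by an outgoing one), the sectors are linearly ordered along the boundary by their incoming edges. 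Writing $\mb{m}_v=\mb{x}$ for the incoming orientation of an edge $x$, its opposite $\bar{\mb{x}}$ is the outgoing edge of a \emph{different} sector, which therefore occurs strictly later; hence $\mb{x}<\bar{\mb{x}}$, so $\mb{x}$ is preferred. This is precisely the sharp form of the ``at least one'' half of the Observation preceding the lemma.

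Consequently each trivalent $v$ is the target of exactly one edge $e_v\in T_G$ with $\mb{e}_v=\mb{m}_v$, while the univalent vertex $u$ has $\mb{m}_u=\bar{\mb{t}}$, which is not preferred, so receives no edge of $T_G$. Since distinct vertices have distinct targets, $v\mapsto e_v$ is a bijection from the $V-1$ trivalent vertices onto $T_G$, giving $\abs{T_G}=V-1$ (and incidentally $\abs{X_G}=E-(V-1)=2g$, using $\chi(G)=V-E=1-2g$). To finish I would define the parent $p(v)$ to be the source of $\mb{e}_v$ and show the potential $v\mapsto\mb{m}_v$ strictly decreases along $v\mapsto p(v)$: the edge $\mb{e}_v=\mb{m}_v$ leaves $p(v)$, so in the boundary cycle it is immediately preceded by an oriented edge $\mb{w}$ pointing into $p(v)$, whence $\mb{m}_{p(v)}\le\mb{w}<\mb{e}_v=\mb{m}_v$. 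As $\mb{t}$ is the global minimum, realized only at the trivalent endpoint $v_0$ of the tail (whose parent is $u$), every parent chain strictly descends to $v_0$ and then to the root. Thus $T_G$ is connected, and being connected with $V-1$ edges on $V$ vertices it is a spanning tree, rooted at the tail.

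The main obstacle is the sharp claim that $\mb{m}_v$ is a preferred orientation; everything else, namely the edge count and the descending potential, is bookkeeping once the boundary cycle is organized into sectors. The one point needing care is that every sector genuinely pairs two \emph{distinct} edges, which holds because a bordered fatgraph has a single boundary cycle and hence no loop at a trivalent vertex, so the outgoing partner $\bar{\mb{x}}$ of $\mb{m}_v$ really does lie in a later sector rather than coinciding with the sector of $\mb{x}$ itself.
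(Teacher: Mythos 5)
Your proof is correct, but it takes a genuinely different route from the paper's. The paper simply reinterprets the definition of $T_G$ as the classical greedy traversal: walk the boundary cycle starting from the univalent vertex and add each edge unless doing so would close a cycle, observing that an edge closes a cycle exactly when $v(\mb{e})$ has already been visited; tree-ness, maximality, and rootedness at the tail are then immediate from this description. You instead verify the tree property directly from the in-degree structure: you sharpen the observation preceding the lemma to show that the minimal oriented edge $\mb{m}_v$ pointing into each trivalent vertex is automatically a preferred orientation, deduce that $T_G$ has exactly one edge directed into each trivalent vertex and none into the univalent one (so $T_G$ has $V-1$ edges), and then obtain connectivity from the strictly decreasing potential $v\mapsto\mb{m}_v$ along parent pointers. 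Both arguments are sound; the paper's is shorter and applies verbatim to vertices of arbitrary valence, which matters because the lemma is stated for all bordered fatgraphs (vertices need only be at least trivalent), whereas your write-up is phrased throughout for trivalent vertices --- though nothing in your argument actually uses trivalence, so the generalization is immediate (replace ``the three sectors at $v$'' by ``all sectors at $v$''). In exchange, your approach yields extra structural information: the explicit parent map, the sharp in-degree statement, and the count $\abs{\mb{X}_G}=2g$. One further remark: your closing worry about loops is not actually needed for the key claim, since if $\mb{\bar m}_v$ happened to be the outgoing edge of the \emph{same} sector as $\mb{m}_v$, it would immediately follow $\mb{m}_v$ in the boundary cycle and hence still occur strictly later, so $\mb{m}_v<\mb{\bar m}_v$ in every case; you are nevertheless right that a single boundary cycle rules out loops at trivalent vertices.
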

\begin{proof}
Consider the following equivalent construction of the subgraph $T_G$.  Begin at the univalent vertex of $G$ and  traverse the boundary cycle of $G$ and ``greedily'' adding every edge to $T_G$ as long as the resulting subgraph is still a tree, meaning no non-trivial cycles would be  introduced.  Since the introduction of a non-trivial cycle from the addition of  an edge $e$ would mean the vertex $v(\mb{e})$  had previously been traversed,  this definition is equivalent to the original one.  From this perspective, $T_G$ is obviously a tree containing the tail, and it is maximal since adding any edge would result in a non-trivial cycle.
\end{proof}

\begin{theorem}
There is a canonical ordered set of generators of $\pi_1$ associated to every marked bordered fatgraph $G\hra \Sigma_{g,1}$. 
\end{theorem}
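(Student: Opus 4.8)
The plan is to take as the canonical ordered generating set the images under the marking of the oriented edges $\mb{X}_G = \{\mb{x}_i\}_{i=1}^{2g}$ produced by the greedy algorithm, ordered by the linear order on $\mathcal{E}_{or}(G)$; since the greedy algorithm involves no arbitrary choices, this set is manifestly canonical, and by the preceding lemma $X_G = G\setminus T_G$ is the complement of a maximal tree, so it has exactly $2g = 1-\chi(G)$ elements. The content to be proved is therefore that the $2g$ elements $\pi_1(\mb{x}_i)\in\pi_1$ generate $\pi_1$. I would deduce this from the three marking conditions available: surjectivity (all $\pi_1(\mb e)$ together generate $\pi_1$), the edge condition (so that $\pi_1(\mb{\bar e})=\pi_1(\mb e)^{-1}$, whence it suffices to generate using preferred orientations), and the vertex condition at the trivalent vertices. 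The strategy is to use the vertex relations to rewrite every \emph{tree}-edge marking as a word in the generator markings, so that surjectivity collapses onto $\{\pi_1(\mb{x}_i)\}$.

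The key structural observation is that, orienting each tree edge away from the root, the greedy tree $T_G$ becomes a rooted tree in which the preferred orientation $\mb e$ of a tree edge $e$ points from its parent to its child (as one checks from the boundary-cycle construction in the lemma), and the map sending a trivalent vertex $v$ to its unique parent edge $e_v$ is a bijection onto the edges of $T_G$, since every non-root vertex has a unique parent and the root is the univalent vertex. By the defining inequality of the greedy algorithm, $\mb{e_v}$ is precisely the minimal oriented edge pointing towards $v$; the remaining two oriented edges pointing towards $v$ are then either reverses $\mb{\bar c}$ of child tree edges $c$ at $v$, or orientations of generator edges (never the reverse of $\mb{e_v}$, which points away from $v$). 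Cyclically rotating the vertex relation at $v$ to begin with $\mb{e_v}$, I solve $\pi_1(\mb{e_v})=\pi_1(\mb b)^{-1}\,\pi_1(\mb a)^{-1}$, where $\mb a,\mb b$ are those two other edges, thereby expressing the parent-edge marking through the markings of the two edges descending from $v$.

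With this in hand I would run an induction from the leaves of $T_G$ towards the root. At a leaf both non-parent incident edges are generator edges, so the vertex relation expresses $\pi_1(\mb{e_v})$ directly as a word in $\{\pi_1(\mb{x}_i)^{\pm1}\}$; at an interior vertex, any child tree edge has already been expressed in the generators by the inductive hypothesis, and its reverse contributes the inverse word, so again $\pi_1(\mb{e_v})$ becomes a word in the generators. Processing the vertex $v_0$ adjacent to the univalent vertex last rewrites even the tail marking (and hence the boundary class $\pi_1(\mb{\bar t})$) as a word in the generators. Combined with surjectivity this shows $\{\pi_1(\mb{x}_i)\}$ generates $\pi_1$; the linear order supplies the canonical ordering, and since $\pi_1\cong F_{2g}$ is free of rank $2g$, a generating set of cardinality $2g$ is automatically a free basis.

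The main obstacle I anticipate is the orientation bookkeeping underpinning the second paragraph: verifying that the minimal oriented edge at each trivalent vertex is exactly the parent tree edge, that the two remaining oriented edges are precisely reverses of children or generator orientations, and that the induction from leaves to root is consequently well-founded. A secondary point needing care is the univalent vertex, whose naive vertex relation would read $\pi_1(\mb{\bar t})=1$ and is false; I would simply note that only the trivalent vertex relations are used, which is consistent since there are exactly as many of them as there are tree edges to eliminate.
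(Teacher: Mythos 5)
Your proposal is correct and takes essentially the same route as the paper: take $\pi_1(\mb{X}_G)$ as the canonical ordered generating set, invoke the surjectivity condition of the geometric $\pi_1$-marking to reduce to expressing tree-edge markings in the generators, and then induct from the leaves of $T_G$ toward the root using the trivalent vertex relations. Your parent-edge bookkeeping and the caveat about the univalent vertex merely flesh out details the paper's proof leaves implicit ("the argument follows easily by induction"), and your closing observation that a $2g$-element generating set of $F_{2g}$ is a basis is exactly the Hopfian argument the paper defers to its Corollary \ref{cor:explicitiso}.
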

\begin{proof}
We take $\pi_1(\mb{X}_G)$ to be the desired set of generators of $\pi_1$ and need only show that they do indeed generate $\pi_1$.
Since a geometric $\pi_1$-marking satisfies the surjectivity condition, it suffices to show that for each oriented edge $\mb{e}$ of $G$ the element $\pi_1(\mb{e})$ is in the subgroup generated by  $\pi_1(\mb{X}_G)$.  To this end, note that each leaf $l$ of the tree $T_G$ is adjacent to two generators in $G$,   so by the vertex compatibility condition, the corresponding element $\pi_1(\mb{l})$ can be written as a product of two elements of $\pi_1(\mb{X}_G)$ (or their inverses).  The argument follows easily by induction.\hfill{}
\end{proof}

\begin{corollary}\label{cor:explicitiso}
For every marked bordered fatgraph, there is  an explicit canonical isomorphism $\pi_1\cong F_{2g}$.
\end{corollary}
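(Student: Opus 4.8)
The plan is to promote the surjection furnished by the preceding theorem to an isomorphism by a rank count together with the Hopfian property of free groups. Concretely, let $\mb{X}_G=\{\mb{x}_i\}_{i=1}^{2g}$ be the canonically ordered set of generators produced by the greedy algorithm, and define a homomorphism $\phi\colon F_{2g}\to\pi_1$ by sending the $i$-th free generator of $F_{2g}$ to $\pi_1(\mb{x}_i)$. The linear ordering on $\mathcal{E}_{or}(G)$ obtained by traversing the boundary cycle from the univalent vertex equips $\mb{X}_G$ with a canonical ordering, so that $\phi$ is determined by the marked fatgraph with no auxiliary choices. By the preceding theorem, $\pi_1(\mb{X}_G)$ generates $\pi_1$, and hence $\phi$ is surjective.

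It then remains to verify injectivity. First I would observe that $\pi_1=\pi_1(\S1g,p)$ is itself free of rank $2g$, since the bordered surface $\S1g$ deformation retracts onto a wedge of $2g$ circles, so $\pi_1\iso F_{2g}$. Next I would confirm the cardinality $|\mb{X}_G|=2g$ directly from the greedy algorithm: since $T_G$ is a spanning tree by the previous lemma, $|X_G|=|E(G)|-|E(T_G)|=1-\chi(G)$, and a genus $g$ bordered fatgraph with one boundary cycle has $\chi(G)=1-2g$, giving exactly $2g$ complementary edges, consistent with the indexing in the definition of $\mb{X}_G$.

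With these two facts in hand, $\phi$ is a surjective homomorphism between free groups of the same finite rank. Fixing any abstract isomorphism $\pi_1\iso F_{2g}$ and composing, $\phi$ is realized as a surjective endomorphism of $F_{2g}$; since finitely generated free groups are Hopfian, every such endomorphism is an automorphism, whence $\phi$ is injective. Equivalently, a generating set of a free group whose cardinality equals the rank is automatically a free basis. Therefore $\phi$ is the desired explicit canonical isomorphism $F_{2g}\iso\pi_1$.

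The main obstacle is really this injectivity step, i.e., ruling out relations among the canonical generators; but I expect no genuine geometric difficulty, as all the content lies in matching the number of generators to the rank of $\pi_1$, after which Hopfianity does the work abstractly. A more self-contained route avoids the Hopfian appeal entirely: collapsing the maximal tree $T_G$ exhibits a homotopy equivalence from the spine $G$ (recall $\S1g\backslash f(G)$ is contractible, so $G$ carries all of $\pi_1$) onto a wedge of $2g$ circles indexed by the edges of $X_G$, whose standard free generators are carried by the dual-arc construction precisely to the elements $\pi_1(\mb{x}_i)$. This directly displays $\{\pi_1(\mb{x}_i)\}_{i=1}^{2g}$ as a free basis and yields the same conclusion.
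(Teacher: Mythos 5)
Your main argument is correct and is essentially the paper's own proof: the paper's entire justification is that the corollary ``follows immediately from the Hopfian property of $F_{2g}$,'' which is exactly your chain of reasoning (surjectivity from the preceding theorem, the count $|\mb{X}_G|=2g$ via the spanning tree and Euler characteristic, freeness of $\pi_1$ of rank $2g$, and Hopfianity); you have merely made the details explicit.

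However, the ``more self-contained route'' in your last paragraph is wrong, and you should not offer it as a replacement. The element $\pi_1(\mb{x}_i)$ is by definition the class of the arc \emph{dual} to $x_i$ (an arc meeting $G$ in a single transverse point interior to $x_i$), and this is not the image of the standard wedge generator indexed by $x_i$ under the tree collapse $G\ra G/T_G$. The tree-collapse generator is represented by a loop $\gamma_i$ lying inside the graph $G$ and traversing $x_i$ exactly once, so $\gamma_i$ and the dual arc $\alpha_i$ meet in exactly one point; hence in $H=H_1(\S1g,\bZ)$ one has $[\alpha_i]\cdot[\gamma_i]=\pm 1$, whereas any class pairs with itself to $0$ under the skew intersection form. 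Therefore $[\alpha_i]\neq[\gamma_i]$, so a fortiori $\pi_1(\mb{x}_i)\neq[\gamma_i]$ in $\pi_1$. Concretely, for the genus-one symplectic chord diagram with chords $a$ and $b$ one computes $[\alpha_a]=\pm[\gamma_b]$ and $[\alpha_b]=\mp[\gamma_a]$: the dual-arc classes form the intersection-dual of the graph-loop basis, not that basis itself. In homology one can still conclude that the $[\alpha_i]$ form a basis (by unimodularity of the pairing), but there is no non-abelian ``dual basis'' argument available in $\pi_1$; to see that the classes $\pi_1(\mb{x}_i)$ form a free basis one genuinely needs that they generate $\pi_1$ (the preceding theorem, proved via the vertex relations) together with the Hopfian property --- that is, your first argument. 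The Hopfian appeal is not avoidable along the lines you suggest.
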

\begin{proof}
 This follows immediately from the Hopfian property of $F_{2g}$.\hfill{}
\end{proof}

From now on when $G$ comes equipped with a marking, we shall  identify $\mb{X}_G$ with the ordered set $\pi_1(\mb{X}_G)$ of generators of $\pi_1$.

\vskip .2in

\begin{corollary}\label{cor:autlift}
To each Whitehead move $W\colon G\ra G'$ between marked trivalent bordered fatgraphs,  there is a canonically associated  element
$$\widetilde{N}(W)\in\Aut(\pi_1)$$
 which is natural in the sense that if $\{W_i\}$ is a sequence of Whitehead moves representing an element $\varphi\in MC(\S1g )\subset \Aut(\pi_1)$, then the composition of the $\widetilde{N}(W_i)$ agrees with the image $N(\varphi)$ of $\varphi$ under the Nielsen embedding.
\end{corollary}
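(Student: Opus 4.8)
The plan is to build $\widetilde{N}(W)$ directly from the canonical isomorphisms furnished by Corollary \ref{cor:explicitiso}, and then to establish naturality by a telescoping argument, so that the statement really is a corollary of the greedy algorithm together with the fact that markings evolve unambiguously under Whitehead moves.

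First I would make the canonical isomorphism explicit. By Corollary \ref{cor:explicitiso}, each of the two marked fatgraphs attached to $W\colon G\ra G'$ carries a canonical ordered generating set of $\pi_1$. Writing $\phi_G\colon F_{2g}\xrightarrow{\sim}\pi_1$ for the isomorphism sending the $i$-th free generator to $\pi_1(\mb{x}_i)$, where $\mb{X}_G=\{\mb{x}_i\}_{i=1}^{2g}$, and likewise $\phi_{G'}$ for the generators $\mb{X}_{G'}$ of the marking on $G'$ obtained by evolving the marking across $W$, I would define
$$\widetilde{N}(W):=\phi_{G'}\circ\phi_G^{-1}\in\Aut(\pi_1).$$
This is canonical precisely because the greedy algorithm is canonical and the marking evolves unambiguously under $W$; concretely, $\widetilde{N}(W)$ is the unique automorphism of $\pi_1$ carrying the ordered set $\pi_1(\mb{X}_G)$ to $\pi_1(\mb{X}_{G'})$.

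For naturality, I would take a composable sequence $\{W_i\colon(G_{i-1},f_{i-1})\ra(G_i,f_i)\}_{i=1}^k$ representing $\varphi$, so that $(G_k,f_k)=(G_0,\varphi\circ f_0)$. Since consecutive moves share their marked fatgraph, the intermediate isomorphisms cancel in pairs and the factors telescope:
$$\widetilde{N}(W_k)\circ\dotsm\circ\widetilde{N}(W_1)=\phi_{G_k}\circ\phi_{G_0}^{-1}.$$
It then remains only to identify $\phi_{G_k}\circ\phi_{G_0}^{-1}$ with $N(\varphi)$.

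This last identification is where I expect the only genuine subtlety to lie. Because $G_k=G_0$ as fatgraphs, the greedy algorithm returns literally the same ordered set of oriented edges, so $\phi_{G_0}$ and $\phi_{G_k}$ can differ only through their markings $f_0$ and $f_k=\varphi\circ f_0$. I would verify the equivariance of the geometric $\pi_1$-marking under the mapping class action: for every oriented edge $\mb{e}$, the dual arc for $\varphi\circ f_0$ is (up to isotopy rel boundary) the image under $\varphi$ of the dual arc for $f_0$, so its $\pi_1$-class is $N(\varphi)$ applied to the class for $f_0$. This uses only that $\varphi$ fixes $\partial\S1g$ pointwise and that $N(\varphi)=\varphi_*$ is by definition the Nielsen action on $\pi_1$. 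Granting this, $\phi_{G_k}=N(\varphi)\circ\phi_{G_0}$, whence $\phi_{G_k}\circ\phi_{G_0}^{-1}=N(\varphi)$, as required. The point to emphasize is that the main content is not in defining $\widetilde{N}(W)$, which is immediate, but in carefully tracking how the dual-arc construction of the $\pi_1$-marking transforms under post-composition of the marking by $\varphi$.
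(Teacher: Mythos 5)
Your proposal is correct and takes essentially the same approach as the paper: $\widetilde{N}(W)$ is defined as the unique automorphism carrying the ordered generating set $\pi_1(\mb{X}_G)$ to $\pi_1(\mb{X}_{G'})$ (your $\phi_{G'}\circ\phi_G^{-1}$ is exactly the paper's automorphism obtained from the Hopfian property), and naturality rests on the same key fact that the generating set of $(G,\varphi\circ f)$ is $\varphi(\pi_1(\mb{X}_G))$. Your telescoping computation and the dual-arc equivariance argument simply make explicit what the paper's proof records as ``obvious'' and ``by construction.''
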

\begin{proof}
Consider the isomorphism which maps the ordered generating set $\mb{X}_G$ to $\mb{X}_G'$.  Again by the Hopfian property of $\pi_1$, this is an automorphism, and it is obvious that it respects composition of Whitehead moves.  
 The last statement follows by noting that if the generating set for $(G,f)$ is $\pi_1(\mb{X}_G)$, then the generating set for $(G,\varphi \circ f)$ is $\varphi (\pi_1(\mb{X}_G))$ by construction.\hfill{}
\end{proof}

In the next section, we shall see that the representation  $\widetilde{N}$  can be described in fairly concrete terms.  Moreover in Section \ref{sect:kernel}, we shall explicitly describe the kernel  of $\widetilde{N}$ (see Proposition \ref{prop:kernel}),  and in Section \ref{sect:image}, we shall describe the image of $\widetilde{N}$ (see Proposition \ref{image}).

\begin{figure}[!h]
\begin{center}
\epsffile{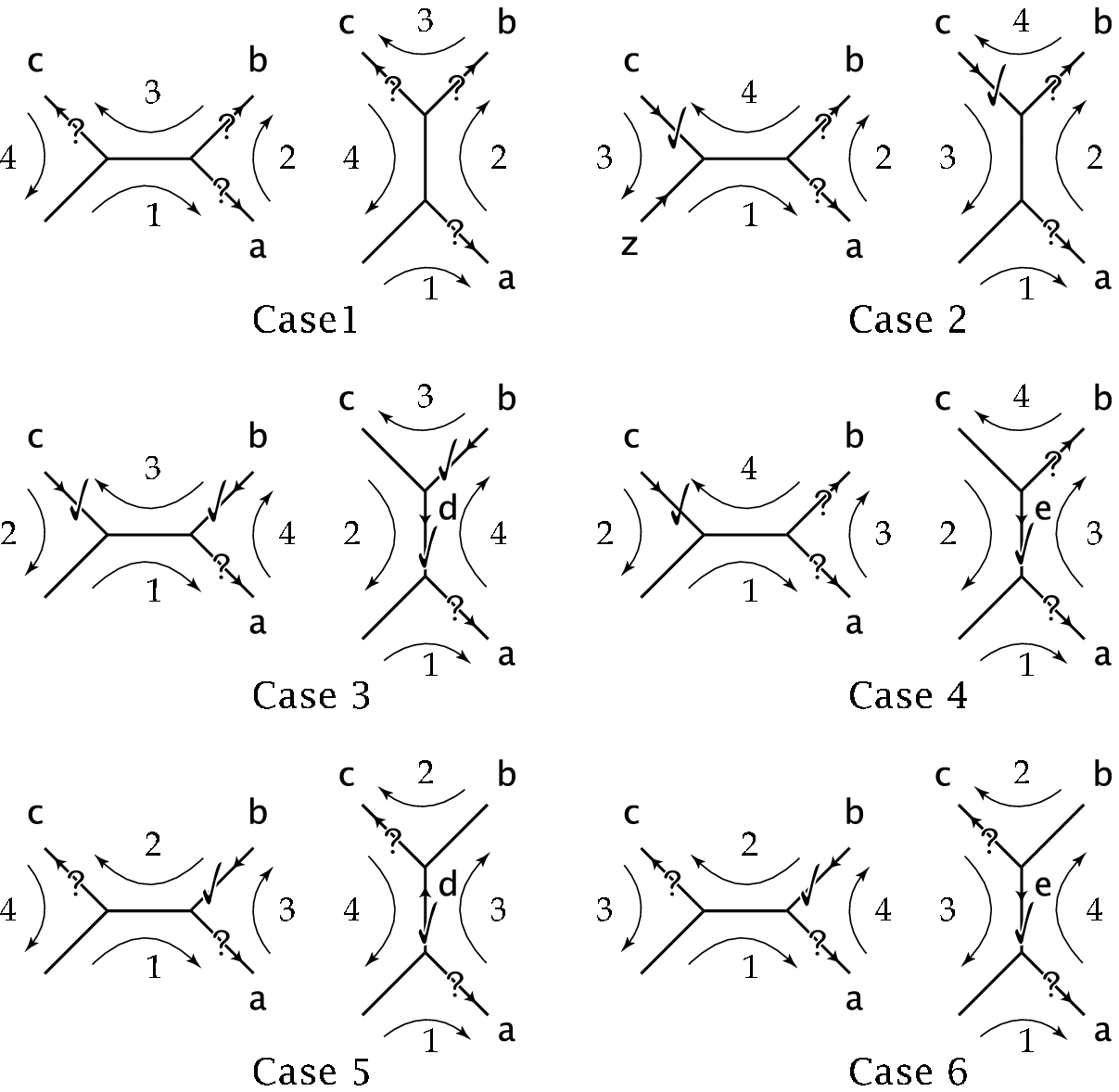}
\caption{Six cases of Whitehead moves $W\colon G\ra G'$}
\label{fig:GW1}
\end{center}
\end{figure}

\subsection{Essential cases of $\widetilde{N}(W)$}

Whitehead moves on bordered fatgraphs can be categorized into six basic types determined by the order of traversal of nearby sectors in the boundary cycle as
depicted in Figure \ref{fig:GW1}, and we now turn towards calculating $\widetilde{N}(W)$  for each.   We  say that  a Whitehead move $W\colon G\ra G'$ is a \emph{type $k$} move if it or its inverse corresponds the the $k$th case  according to our labeling in the figure.  We will find it most illuminating  to write our expressions  as elements of $Aut(F_{2g})$ rather than $Aut(\pi_1)$ via the isomorphism $\pi_1\cong F_{2g}$ provided by Corollary \ref{cor:explicitiso}.  

 First, consider the type 1 Whitehead move.  The initial fatgraph  $G$ has three edges $\mb{a}$, $\mb{b}$, and $\mb{c}$ which may  be generators (represented by question marks) depending on the global properties of the graph (not depicted).  The resulting fatgraph $G'$ similarly has three possible generators which are naturally identified with those of the first fatgraph. By construction,  $\mb{a}$ is a generator of $G$ if and only if it is a generator for $G'$ and similarly for the edges $\mb{b}$ and $\mb{c}$.  Moreover, the order of appearance of these generators in $\mb{X}_G$ and $\mb{X}_{G'}$ must be the same.  Thus, the element of $\Aut(F_{2g})$ corresponding to this Whitehead move is the identity element.

For a type 2  move, whereas the edges $\mb{a}$ and $\mb{b}$ perhaps may not be generators,
the edge 
$\mb{c}$ is definitely a generator (represented by a check mark) since 
 the vertex to which it points was first traversed in sector 1.
 In any case,   the corresponding element of $\Aut(F_{2g})$  is again the identity element.

Next,  consider a type 3 Whitehead move; note that the edges $a$ and $c$ may coincide. In any case, the edges $\mb{b}$ and  $\mb{c}$ must be generators of $G$ while $\mb{b}$ and $\mb{d}$ must be generators of $G'$.  Moreover, if $\mb{c}$ is the $i$th generator $\mb{x}_i$ of $G$, then $\mb{d}$ must be the $i$th generator of $G'$ so that   under the Whitehead move we have $\mb{c}\mapsto \mb{d}$ while all other generators are fixed.  Now note that by the vertex condition for $G'$, we have the relation $\mb{b}\mb{c}\mb{\bar d}=1$ so that  $\mb{c}\mapsto \mb{d}=\mb{ b}\mb{c}$.  If $\mb{b}$ is the $j$th generator $\mb{x}_j$ of $G$,   then we can explicitly write the corresponding  element of $\Aut(F_{2g})$ as
\[
\begin{array}{ll}
\mb{x}_k  \mapsto \mb{x}_k, &\textrm{ for } k\neq i,\\
\mb{x}_i \mapsto \mb{ x}_j \mb{x}_i.
\end{array}
\]

For case 4, the situation is almost identical to case 3 except that now $\mb{b}$ need not be a generator, and we have the slightly different relation $\mb{c}\mapsto \mb{e}=\mb{\bar b}\mb{c}$.  If $\mb{b}$ is a generator, say $\mb{x}_j$, then we find
\[
\begin{array}{ll}
\mb{x}_k  \mapsto \mb{x}_k,  &\textrm{ for } k\neq i,\\
\mb{x}_i \mapsto \mb{\bar  x}_j \mb{x}_i.
\end{array}
\]
If $\mb{b}$ is not a generator of $G$, then we must first express $\mb{b}$ as a word in the generators (which can be obtained from the combinatorics of the fatgraph) before arriving at an explicit element of $\Aut(F_{2g})$.

Consider now case 5, where  the edge $\mb{b}$ must be a generator of $G$ while the edge $\mb{d}$ must be a generator of $G'$.  The vertex condition forces the relation $\mb{d}\mb{b}\mb{\bar c}=1$, so that $\mb{d}=\mb{c}\mb{\bar b}$.  Now assume that $\mb{b}$ is the $i$th generator of $G$ so that $X_G=(\mb{x}_1, \ldots, \mb{x}_{i-1}, \mb{b}, \mb{x}_{i+1}, \ldots, \mb{x}_{2g})$  and that $\mb{d}$ is the $j$th generator of $G'$.  Under this Whitehead move, we find that
\[
\begin{array}{ll}
\mb{x}_k  \mapsto \mb{x}_k,  &\textrm{ for } k<i, \\
\mb{x}_k  \mapsto \mb{x}_{k+1},  &\textrm{ for } i\leq k<j ,\\
\mb{x}_j  \mapsto \mb{c}\mb{\bar x}_i ,& \\
\mb{x}_k  \mapsto \mb{x}_k,  &\textrm{ for } k>j.
\end{array}
\]
If $\mb{c}$ is a generator of $G$ (so that  $\mb{c}=\mb{ x}_{i+1}$), then the above maps explicitly determine the element of $\Aut(F_{2g})$, and otherwise, one must first express $\mb{c}$ as a word in the $\mb{x}_k$.

For the Whitehead move of type 6,  we have a situation which is essentially identical to that of case 5 except that now the generator $\mb{e}$ has an orientation which is opposite that of the generator $\mb{d}$ of case 5.  Thus, if we let $\mb{e}$ be the $j$th generator of $G'$, then we get the same mapping $X_G\mapsto X_{G'}$ as in case 5 except that $\mb{x}_j\mapsto \mb{x}_i\mb{\bar c}$.

 Thus, the  values respectively taken by our representation  $\widetilde{N}$ for the six essential  types of  Whitehead moves  are the identity in the first two cases, ``local'' in the third case in the sense that $\widetilde{N}(W)$ depends only upon the edges near the edge of the Whitehead move, and not necessarily local in the remaining cases.
 We can summarize our results with  the following
 
\begin{theorem}\label{thm:canAutlift}
There is an explicit extension
\[
\widehat{N}\colon \mf{MC}(\S1g)\ra Aut(F_{2g})
\]
of Nielsen's embedding to a representation of 
the mapping class groupoid with target $Aut(F_{2g})$.  Its value  $\widehat{N}(W)$ for a Whitehead move $W\colon G\ra G'$ on an edge $e$ of $G$ is explicitly calculable, and its 
particular
form depends on six essential cases corresponding to the possible  orders of traversal of the four sectors surrounding the edge $e$.
\end{theorem}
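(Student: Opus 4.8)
The plan is to assemble the explicit extension from the infrastructure already in place. By Corollary \ref{cor:autlift}, each Whitehead move $W\colon (G,f)\ra(G',f')$ between \emph{marked} trivalent bordered fatgraphs already carries a canonical automorphism $\widetilde{N}(W)\in\Aut(\pi_1)$, namely the one sending the greedy generating set $\mb{X}_G$ to $\mb{X}_{G'}$, and this assignment respects composition and recovers Nielsen's $N$ on sequences representing mapping classes. What remains is to (i) make $\widetilde{N}(W)$ completely explicit as an element of $\Aut(F_{2g})$ via the canonical isomorphism $\pi_1\cong F_{2g}$ of Corollary \ref{cor:explicitiso}, (ii) check that the resulting formulae are purely combinatorial, hence descend from the marked Ptolemy level to the unmarked mapping class groupoid, and (iii) confirm well-definedness modulo the defining relations.

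For the explicit computation I would first verify that every Whitehead move falls into exactly one of the six types of Figure \ref{fig:GW1}, classified by the cyclic order in which the four sectors surrounding the collapsed edge $e$ are met along the unique boundary cycle. In each type, the greedy algorithm of Section \ref{sec:greedy} determines precisely which of the incident edges $\mb{a},\mb{b},\mb{c},\dots$ are generators and in what order they appear in $\mb{X}_G$ and $\mb{X}_{G'}$; feeding this into the edge and vertex compatibility conditions then expresses each target generator as a word in the source generators, yielding $\widehat{N}(W)\in\Aut(F_{2g})$ on the nose. Types $1$ and $2$ give the identity; type $3$ is local, $\mb{x}_i\mapsto\mb{x}_j\mb{x}_i$; and types $4$--$6$ are the genuinely global cases, where a non-generator edge may first have to be rewritten as a path-word in the maximal tree $T_G$.

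The crux of the descent is the observation that every one of these formulae is phrased \emph{solely} in terms of combinatorial data---the indices of the generators, the relations imposed by the vertex condition, and tree-words read off from $T_G$---and never refers to the marking $f$ itself. Consequently, for a fixed combinatorial move the element $\widehat{N}(W)\in\Aut(F_{2g})$ is independent of the chosen marking; equivalently, $\widetilde{N}$ is invariant under the free action of $MC(\S1g)$ on markings. Since $\mf{MC}(\S1g)$ is precisely the quotient of ${\mathfrak{Pt}}(\S1g)$ by this action, the map descends to a well-defined $\widehat{N}\colon\mf{MC}(\S1g)\ra\Aut(F_{2g})$. Well-definedness modulo the pentagon, commutativity, and involutivity relations is then automatic: each such relation equates two sequences of moves with the same source and target (marked) fatgraphs, and since $\widetilde{N}(W)$ depends only on endpoints, so does any composite; the naturality clause of Corollary \ref{cor:autlift} simultaneously guarantees that on the mapping-class loops $\Hom_{\mf{MC}}(G,G)\cong MC(\S1g)$ the representation reproduces $N$, so $\widehat{N}$ genuinely extends Nielsen's embedding.

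I expect the main obstacle to be the exhaustive case analysis underlying step (i), and in particular the global cases $5$ and $6$. There, inserting or deleting a generator under the move forces a block reindexing $\mb{x}_k\mapsto\mb{x}_{k+1}$ for $i\leq k<j$, and the edge $\mb{c}$ (or $\mb{b}$) appearing in the image $\mb{x}_j\mapsto\mb{c}\mb{\bar x}_i$ need not itself be a generator, so it must be expanded as a tree-word before one obtains an honest element of $\Aut(F_{2g})$. Checking that the six sector-traversal patterns are genuinely exhaustive, that the generator relabeling is forced and consistent across the move, and that the resulting assignment is compatible with composition is the delicate bookkeeping at the heart of the proof.
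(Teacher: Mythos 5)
Your outline reproduces the paper's strategy: the six sector-traversal cases give explicit formulae, these formulae are purely combinatorial and hence marking-independent, so they descend from ${\mathfrak{Pt}}(\S1g)$ to $\mf{MC}(\S1g)$, and the naturality clause of Corollary \ref{cor:autlift} identifies the restriction to loops with Nielsen's $N$. However, the one substantive thing the paper's proof actually verifies --- that the descended map respects composition, which is the defining property of a groupoid representation here --- is a step you simultaneously declare ``automatic'' (your third paragraph) and defer as ``delicate bookkeeping'' (your fourth paragraph), and the ``automatic'' justification does not stand as written.

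Here is the gap. For a move $W\colon G\ra G'$, the element $\widehat{N}(W)\in\Aut(F_{2g})$ is not $\widetilde{N}(W)$ read through one fixed identification of $\pi_1$ with $F_{2g}$; it is $\iota_G^{-1}\circ\widetilde{N}(W)\circ\iota_G$, where $\iota_G\colon F_{2g}\ra\pi_1$ sends the standard basis to the greedy generators $\mb{X}_G$ of the \emph{source}, and this frame changes from object to object. Consequently the homomorphism property of $\widetilde{N}$ (Corollary \ref{cor:autlift}, values in $\Aut(\pi_1)$) does not formally transfer to $\widehat{N}$ (values in $\Aut(F_{2g})$): for composable $W_1\colon G\ra G_1$ and $W_2\colon G_1\ra G_2$ the two values are conjugated by \emph{different} frames, $\iota_G$ and $\iota_{G_1}$. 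In particular, your appeal to ``composites depend only on endpoints'' is a statement about $\widetilde{N}$, and it cannot be quoted for products of $\widehat{N}$-values until one relates the two. That relation is precisely what the paper proves, in the guise of the substitution identity $\psi\varphi=\varphi(\varphi^{-1}\psi\varphi)$: composition in $\Aut(F_{2g})$ is computed by substituting the words $\mb{u}_i(\mb{x}_1,\dots,\mb{x}_{2g})$ into $\mb{w}_i(\mb{u}_1,\dots,\mb{u}_{2g})$, with the attendant reversal of order relative to concatenation of moves. The repair is short: since $\iota_{G_1}=\widetilde{N}(W_1)\circ\iota_G$ by construction of the greedy generators, one computes
\[
\widehat{N}(W_1)\circ\widehat{N}(W_2)
=\bigl(\iota_G^{-1}\widetilde{N}(W_1)\iota_G\bigr)\bigl(\iota_G^{-1}\widetilde{N}(W_1)^{-1}\widetilde{N}(W_2)\widetilde{N}(W_1)\iota_G\bigr)
=\iota_G^{-1}\,\widetilde{N}(W_2)\widetilde{N}(W_1)\,\iota_G,
\]
which is exactly the value assigned to the concatenation of $W_1$ followed by $W_2$. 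Only once this identity is in place do your endpoint-dependence argument for the pentagon, commutativity, and involutivity relations and the rest of your outline go through.
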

\begin{proof}
 Since the formulae for the representation $\widetilde{N}$ in terms of  $Aut(F_{2g})$ did not depend on the explicit markings of the fatgraphs, they define a map $\widehat{N} \colon \mf{MC}(\S1g) \ra Aut(F_{2g})$, which we claim is 
 a representation in the sense that
for any two composable Whitehead moves $W_1\colon G\ra G_1$ and $W_2\colon G_1\ra G_2$, 
we have $\widehat{N}(W_1\circ W_2)=\widehat{N}(W_2)\circ\widehat{N}(W_1)$; this change of ordering reflects the simple change of composition for functions from right-to-left and for concatenation of paths from left-to-right.
Indeed, this follows from the fact that for elements $\varphi,\psi\in Aut(F_{2g})$ defined  by $\varphi:\mb{x}_i\mapsto\mb{u}_i=\mb{u}_i(\mb{x}_1,\ldots, \mb{x}_{2g})$ and $\psi:\mb{u}_i\mapsto \mb{w}_i=\mb{w}_i(\mb{u}_1,\ldots, \mb{u}_{2g})$, the composition $\psi \varphi=\varphi(\varphi^{-1} \psi \varphi)\in Aut(F_{2g})$ is given by 
\[
\mb{x}_i\mapsto \mb{w}_i=\mb{w}_i(\mb{x}_1,\ldots, \mb{x}_{2g})=\mb{w}_i(\mb{u}_1,\ldots, \mb{u}_{2g})|_{\mb{u}_i=\mb{u}_i(\mb{x}_1,\ldots, \mb{x}_{2g})}.
\]\hfill{}
\end{proof}

\section{The Magnus representation}

Recall \cite{fox}  that the Fox free derivative with respect to $\mb{x}_i$  can be defined as the unique derivation $\pbyp{}{\mb{x}_i}\colon \bZ[ \pi_1] \ra\bZ [\pi_1]$ satisfying $\pbyp{}{\mb{x}_i}(\mb{x}_i)=1$, $\pbyp{}{\mb{x}_i}(\mb{\bar x}_i)=-\mb{\bar x}_i$, and the product rule $\pbyp{}{\mb{x}_i}(\mb{w}_1 \mb{w}_2)=\pbyp{}{\mb{x}_i}(\mb{w}_1)+\mb{w}_1\pbyp{}{\mb{x}_i}(\mb{w}_2)$.   One of the important properties of the Fox free derivative is the chain rule which states that if  $\mb{u}_1,\ldots, \mb{u}_{2g}$ is another generating set for $\pi_1$ and $\mb{w}\in\pi_1$ is a word, then
\begin{equation}\label{eq:chainrule}
\pbyp{\mb{w}}{\mb{x}_j}=\sum_{i=1}^{2g}\left( \pbyp{\mb{w}}{\mb{u}_i} \right)_{\mb{u}_i=\mb{u}_i(\mb{x}_1,\ldots, \mb{x}_{2g})}  \left( \pbyp{\mb{u}_i}{\mb{x}_j} \right).
\end{equation}
The classical Magnus representation of $\Aut(\pi_1)$ is the map which associates to any element $\varphi$ of  $\Aut(\pi_1)$ its Fox Jacobian $\left( \pbyp{\varphi(\mb{x}_i)}{\mb{x}_j}\right)$ with respect to a given  basis $\{\mb{x}_i\}_{i=1}^{2g}$;  this map  is a  crossed homomorphism by \eqref{eq:chainrule}, cf. \cite{morita}.

It is an immediate consequence of Corollary \ref{cor:autlift} that the Magnus representation extends to the Ptolemy groupoid.  However, such an extension a priori would be non-canonical as it would depend on a choice of generating set for $\pi_1$.  Instead, we extend the Magnus representation by 
\[
\widetilde{M}(W\colon G\ra G')=\left( \pbyp{x_i'}{x_j} \right)  
\] 
where $\{\mb{x}_i\}_{i=1}^{2g}$ and $\{\mb{x}'_i\}_{i=1}^{2g}$ are the sets of generators for $G$ and $G'$ respectively.  As a consequence of this definition and \eqref{eq:chainrule}, we have the 

\begin{corollary}
The Magnus representation explicitly extends to a representation $\widetilde M$ of  the Ptolemy groupoid with target $Gl(2g,\bZ[\pi_1])$.
\end{corollary}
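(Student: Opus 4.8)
The plan is to verify three things about the assignment $\widetilde M(W\colon G\ra G')=\left(\pbyp{x_i'}{x_j}\right)$: that it is well defined on ${\mathfrak{Pt}}(\S1g)$, that it respects composition, and that its values are invertible over $\bZ[\pi_1]$. The starting observation is that the entry $\pbyp{x_i'}{x_j}$ in fact makes sense for \emph{any} ordered pair of marked bordered fatgraphs, not just those joined by a single Whitehead move: by the greedy algorithm of Section \ref{sec:greedy}, each of $G,G'$ carries an ordered generating set $\mb{X}_G=\{\mb{x}_j\}$, $\mb{X}_{G'}=\{\mb{x}_i'\}$ of the \emph{same} group $\pi_1$, so every $\mb{x}_i'$ may be written as a word in the $\mb{x}_j$ and Fox-differentiated. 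Because ${\mathfrak{Pt}}(\S1g)$ carries a unique morphism between any two objects, a function on morphisms is just a function of such pairs, so the pentagon, commutativity, and involutivity relations require no separate check; equivalently, once multiplicativity is established, the value of $\widetilde M$ on a composite $G_0\ra\cdots\ra G_n$ telescopes by the chain rule to the single change-of-basis Jacobian $\left(\pbyp{(x^{(n)}_i)}{x^{(0)}_j}\right)$ determined only by the endpoints, so it is independent of the chosen sequence of moves.

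The heart of the argument is the composition law, where the chain rule \eqref{eq:chainrule} does all the work. For composable moves $W_1\colon G\ra G_1$ and $W_2\colon G_1\ra G_2$ with greedy generating sets $\{\mb{x}_j\}$, $\{\mb{u}_i\}$, $\{\mb{w}_k\}$ respectively, I would apply \eqref{eq:chainrule} to each generator $\mb{w}_k$ of $G_2$:
\[
\pbyp{w_k}{x_j}=\sum_{i=1}^{2g}\left(\pbyp{w_k}{u_i}\right)_{\mb{u}_i=\mb{u}_i(\mb{x})}\pbyp{u_i}{x_j}.
\]
The crucial point, and the one deserving the most care, is that the substitution $\mb{u}_i=\mb{u}_i(\mb{x})$ is the \emph{identity} on $\bZ[\pi_1]$: the generators of $G_1$ are honest elements of $\pi_1$, so re-expressing $\pbyp{w_k}{u_i}$ in the $\mb{x}$-basis merely rewrites one and the same element of the group ring. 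Thus the displayed identity is precisely the matrix product $\widetilde M(W_1\circ W_2)=\widetilde M(W_2)\,\widetilde M(W_1)$, the order reversal matching the convention of Theorem \ref{thm:canAutlift}. This is exactly where the groupoid extension improves on the classical situation: for the Magnus representation on $\Aut(\pi_1)$ with a \emph{fixed} basis, the analogous intermediate substitution is the nontrivial ring automorphism induced by an automorphism of $\pi_1$, which is what makes that map only a crossed homomorphism; letting the generating set move with the fatgraph replaces that twist by the identity and yields an honest (anti)homomorphism of the groupoid.

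Finally I would settle invertibility, which is where the target $Gl(2g,\bZ[\pi_1])$ enters. The identity morphism has $\widetilde M(\mathrm{id}_G)=\left(\pbyp{x_i}{x_j}\right)=I$, so for any Whitehead move $W\colon G\ra G'$ its groupoid inverse $W^{-1}\colon G'\ra G$ satisfies $\widetilde M(W^{-1})\,\widetilde M(W)=\widetilde M(\mathrm{id}_G)=I$ and $\widetilde M(W)\,\widetilde M(W^{-1})=\widetilde M(\mathrm{id}_{G'})=I$ by the composition law just established. Hence each $\widetilde M(W)$ is a genuine two-sided invertible matrix over $\bZ[\pi_1]$, so $\widetilde M$ indeed lands in $Gl(2g,\bZ[\pi_1])$, and the explicit words for the $\mb{x}_i'$ produced by the six essential cases of Section \ref{sec:greedy} make the matrices explicitly computable. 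The only real subtlety in the whole argument is the vacuousness of the chain-rule substitution noted above; everything else follows formally from Corollary \ref{cor:autlift} and the standard properties of the Fox calculus.
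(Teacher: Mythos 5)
Your proof is correct and takes essentially the same route as the paper, which derives the corollary directly from the definition $\widetilde M(W)=\bigl(\pbyp{x_i'}{x_j}\bigr)$ and the Fox chain rule \eqref{eq:chainrule}; your observations that the intermediate substitution is the identity on $\bZ[\pi_1]$ (in contrast to the twist that makes the classical Magnus map only a crossed homomorphism) and that uniqueness of morphisms in ${\mathfrak{Pt}}(\S1g)$ disposes of the relations are exactly the points the paper leaves implicit. No gaps.
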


Again, the formulae are governed by the six types of Whitehead moves, and we proceed to describe each.
The first non-trivial type is the third, where we have $\mb{x}_i \mapsto \mb{ x}_j \mb{x}_i$ and find a matrix in $Gl(2g,\bZ[\pi_1])$ which is the identity except for the $i$th  row
\begin{equation} \label{W2mtrx}
\left(  0, \ldots,  \mb{ x}_j, \ldots,   1,\ldots , 0 \right),
\end{equation}
which has all entries zero except for $\pbyp{}{\mb{x}_i}( \mb{ x}_j\mb{x}_i )=\mb{x}_j$ in the $i$th position and $\pbyp{}{\mb{x}_j}( \mb{ x}_j\mb{x}_i )=1$ in the $j$th position.

In case 4, we have $\mb{x}_i \mapsto \mb{\bar  b} \mb{x}_i$,  which again gives a  matrix differing from the identity only in its $i$th row, where If  $\mb{b}=\mb{x}_j$ is a generator, then this row
\begin{equation}\label{W2invmtrx}
\left(  0, \ldots,  \mb{\bar x}_j, \ldots,   -1,\ldots , 0 \right)
\end{equation}
has all entries zero except for $\mb{\bar x}_j$ in the $i$th position and $-1$ in the $j$th position.  If $\mb{b}$ is not a generator, there is a more complicated matrix with the $i$th row the Fox gradient:
\[
\left(\pbyp{\mb{\bar b }}{\mb{x}_1},  \pbyp{\mb{\bar b }}{\mb{x}_2}, \ldots , \mb{\bar b}, \pbyp{\mb{\bar b }}{\mb{x}_{i+1}} , \ldots    ,\pbyp{\mb{\bar b }}{\mb{x}_{2g}} \right).
\]

Assuming that $\mb{c}=\mb{x}_{i+1}$ is a generator in case 5, the corresponding matrix is the identity except for the  $(i,j)$ submatrix which is given by
\[
\begin{pmatrix}
0& 1 &  & \ldots &0\\
& 0 & 1 & \\
&&0&\ddots \\
&&&\ddots&1\\
-\mb{x}_{i+1}\mb{\bar x}_i& 1& 0 &\ldots&0
\end{pmatrix}.
\]
If $\mb{c}$ is not a generator, then the $j$th row is replaced by
\[
\left(
\pbyp{\mb{c}}{\mb{x}_{1}},\pbyp{\mb{c}}{\mb{x}_2} ,\ldots,\pbyp{\mb{c}}{\mb{x}_{i-1}} , \pbyp{\mb{c}}{\mb{x}_{i}} -\mb{c \bar{x}_i} , \pbyp{\mb{c}}{\mb{x}_{i+1}} ,\ldots  , \pbyp{\mb{c}}{\mb{x}_{2g}}
\right).
\]

Case 6 is almost identical to case 5, except that in this case if $\mb{c}$ is not a generator, then the $j$th row is replaced by
\[
\left(
\pbyp{\mb{\bar x}_i\mb{c}}{\mb{x}_{1}},\pbyp{\mb{\bar x}_i\mb{c}}{\mb{x}_2} ,\ldots,\pbyp{\mb{\bar x}_i\mb{c}}{\mb{x}_{i-1}} , -\mb{\bar x}_i + \mb{\bar x}_i\pbyp{\mb{c}}{\mb{x}_{i}} , \pbyp{\mb{\bar x}_i\mb{c}}{\mb{x}_{i+1}} ,\ldots  , \pbyp{\mb{\bar x}_i\mb{c}}{\mb{x}_{2g}}
\right).
\]
This completes the discussion of the various cases.

Morita \cite{morita} introduced variations $M_k\colon MC(\S1g) \ra Gl(2g, \bZ[N_k])$  of the Magnus representation by composing the classical Magnus representation described above with the quotient maps $\pi_1\ra N_k$, where $N_k=\pi_i/\pi_1^{(k)}$ is the $k$th nilpotent quotient of $\pi_1$ (for $k=1$, see \cite {Suzuki02}).  In the same way, our extension of the Magnus representation immediately yields extensions
$$\widetilde{M}_k\colon {\mathfrak{Pt}}(\S1g)\ra Gl(2g, \bZ[N_k]).$$
  Moreover, the value of these extensions on a Whitehead move $W\colon G \ra G'$ can be computed purely from the combinatorics of $G$ together with the surjective $N_k$-markings of $G$  induced from its $\pi_1$-marking.  In particular in the case $k=1$, we obtain a representation $\widetilde{M}_1\colon {\mathfrak{Pt}}(\S1g)\ra Gl(2g,\bZ[H])$ whose value on $W\colon G\ra G'$ depends only on the $H$-marking of $G$.  Thus,  we also have the stronger result:
\begin{proposition}
The representation $M_1\colon MC(\S1g)\ra Gl(2g,\bZ[H])$ extends to a representation  of  the Torelli groupoid  $$\widetilde M_H\colon {\mathfrak{To}}(\S1g)\ra Gl(2g,\bZ[H]).$$
\end{proposition}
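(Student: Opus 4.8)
The plan is to realize $\widetilde M_H$ as the factorization of the Ptolemy-groupoid representation $\widetilde M_1$ through the natural projection onto the Torelli groupoid. First I would record the projection functor $p\colon{\mathfrak{Pt}}(\S1g)\ra{\mathfrak{To}}(\S1g)$ that sends a $\pi_1$-marked bordered fatgraph to the geometrically $H$-marked fatgraph obtained by post-composing its $\pi_1$-marking with the abelianization $\pi_1\ra H$, and sends a Whitehead move to the combinatorially identical move on the associated $H$-marking. Since $H$-markings evolve unambiguously under Whitehead moves and the pentagon, commutativity, and involutivity relations of ${\mathfrak{Pt}}(\S1g)$ are purely combinatorial, $p$ is a well-defined, surjective groupoid homomorphism; indeed, as noted in Section \ref{sec:defns}, ${\mathfrak{To}}(\S1g)$ is exactly the quotient of ${\mathfrak{Pt}}(\S1g)$ under the Torelli group, identifying two $\pi_1$-markings of a fixed fatgraph precisely when they share an $H$-marking, equivalently when they differ by an element of $\mc{I}(\S1g)$.

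Next I would invoke the already-established fact that the value $\widetilde M_1(W\colon G\ra G')\in Gl(2g,\bZ[H])$ depends only on the $H$-marking of $G$. This says exactly that $\widetilde M_1$ is constant on the fibers of $p$ over Whitehead moves: if $W$ and $W'$ are lifts along $p$ of one and the same $H$-marked Whitehead move, then $\widetilde M_1(W)=\widetilde M_1(W')$. Consequently there is a unique prescription $\widetilde M_H(\bar W):=\widetilde M_1(W)$ on the generating Whitehead moves of ${\mathfrak{To}}(\S1g)$, for any lift $W$ of $\bar W$, satisfying $\widetilde M_H\circ p=\widetilde M_1$ on generators. Concretely, $\widetilde M_H$ is given by the same matrices \eqref{W2mtrx}, \eqref{W2invmtrx}, and so on, now read in $Gl(2g,\bZ[H])$, where every word in the generators appearing in those formulae is replaced by its $H$-class, which is precisely the $H$-marking datum.

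It then remains to verify that $\widetilde M_H$ extends to a well-defined representation of all of ${\mathfrak{To}}(\S1g)$, namely that it respects composition and the three defining relations. For this I would lift compatibly: given a composable string of $H$-marked moves, choose a $\pi_1$-marking of the initial fatgraph and propagate it through the string, obtaining a composable string in ${\mathfrak{Pt}}(\S1g)$ that projects under $p$ to the given one. Because $\widetilde M_1$ is a representation of ${\mathfrak{Pt}}(\S1g)$, it respects composition and the pentagon, commutativity, and involutivity relations on this lift, and the identity $\widetilde M_H\circ p=\widetilde M_1$ transports these identities down to ${\mathfrak{To}}(\S1g)$. Finally, that $\widetilde M_H$ extends $M_1$ follows as in Corollary \ref{cor:autlift}: a mapping class is represented by a loop $L$ in ${\mathfrak{Pt}}(\S1g)$ on which $\widetilde M_1$ evaluates to $M_1$ of that class (by construction of $\widetilde M_1$ as the groupoid Magnus representation followed by $\bZ[\pi_1]\ra\bZ[H]$), and $p(L)$ inherits the same value under $\widetilde M_H$.

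The step I expect to be the main obstacle is the compatibility of composition, since $\widetilde M_1$ is only a crossed homomorphism: its composition law is governed by the Fox chain rule \eqref{eq:chainrule}, whose twisting substitutes the generators of one fatgraph for those of the next. I would need to confirm that, after projection to $\bZ[H]$, this substitution is dictated entirely by the $H$-images of the generators, that is, by the $H$-marking data already carried by the objects of ${\mathfrak{To}}(\S1g)$, so that the descended composite is independent of the chosen $\pi_1$-lift and genuinely agrees with $\widetilde M_H$ of the composite move. Once this is checked, the factorization $\widetilde M_H$ is a bona fide groupoid representation, manifestly invariant under $\mc{I}(\S1g)$ and extending $M_1$.
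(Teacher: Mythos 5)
Your proposal is correct and is essentially the paper's own (very terse) argument: the paper deduces the proposition immediately from the observation that $\widetilde M_1(W\colon G\ra G')$ depends only on the $H$-marking of $G$, which is precisely your descent-through-$p$ argument with the details of lifting and well-definedness spelled out. Your closing worry about crossed-homomorphism twisting is a non-issue: the groupoid extension $\widetilde M$ is already an honest representation of ${\mathfrak{Pt}}(\S1g)$ (the twisting of the classical Magnus crossed homomorphism is absorbed by the varying generating sets attached to the objects, via the chain rule), and composing with the ring homomorphism $\bZ[\pi_1]\ra\bZ[H]$ preserves compositionality, so the descended map respects composition automatically once each factor is determined by the $H$-data.
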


\section{Linear chord diagrams and the kernel of $\widetilde{N}$ }\label{sect:kernel}
We determine the kernel  of the extension $\widetilde N \colon {\mathfrak{Pt}}(\S1g)\ra \Aut(\pi_1)$
of the Nielsen embedding in this section.

\begin{lemma}\label{lem:chord}
Given any trivalent marked bordered fatgraph $G=G_0$, there is a sequence of Whitehead moves $\{W_i\colon G_{i-1}\ra G_i\}_{i=1}^{k}$ with $\widetilde{N}(W_i)=\Id\in \Aut(\pi_1)$, for all $i$, such that
$G_k$ is a fatgraph whose maximal tree $T_{G_k}$ is a line segment.
\end{lemma}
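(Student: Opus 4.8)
\emph{Setup and complexity measure.} My plan is to straighten the greedy tree by an induction that uses only the moves with $\widetilde{N}(W)=\Id$. Since $T_{G}$ is rooted at the tail (univalent vertex), every other vertex $v$ has a unique incident tree-edge pointing toward the root, so the tree-degree of a trivalent $v$ equals $1$ plus its number of tree-children, and its remaining incident edges are generators. Hence $v$ has tree-degree $3$ (I will call such a $v$ a \emph{branch vertex}) exactly when all three edges at $v$ lie in $T_G$; it has tree-degree $2$ when one incident edge is a generator, and tree-degree $1$ (a tree-leaf carrying two generators) otherwise. A tree all of whose vertices have tree-degree at most $2$ is a path, so $T_{G}$ is a line segment if and only if $G$ has no branch vertex. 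I therefore set the complexity of $G$ to be the number $b(G)$ of branch vertices and argue by induction on $b(G)$, the base case $b(G)=0$ being the conclusion. Throughout I use only the moves of type $1$ and type $2$ from Figure \ref{fig:GW1}; by the case analysis preceding Theorem \ref{thm:canAutlift} these are exactly the Whitehead moves whose value is $\widetilde{N}(W)=\Id$, since types $3$ and $4$ produce transvections $\mb{x}_i\mapsto\mb{x}_j^{\pm1}\mb{x}_i$ and types $5$ and $6$ produce nontrivial reindexings, none of which can be the identity.

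\emph{The reduction step.} Assume $b(G)>0$ and choose a \emph{deepest} branch vertex $v$, that is, one none of whose $T_G$-descendants is a branch vertex; then the two child-subtrees $S_1,S_2$ of $v$, with $S_1$ encountered first along the boundary cycle, are already line segments. The plan is to splice $S_2$ onto the far end of $S_1$ so that the two former subtrees become a single path issuing from $v$: I would slide the edge attaching $S_2$ along the boundary cycle, one edge at a time, by a sequence of Whitehead moves, until $v$ retains a single tree-child and its third incident edge becomes a generator. Because $v$ was chosen deepest, this introduces no branching inside $S_1$ or $S_2$, while $v$ itself drops to tree-degree $2$; a successful splice thus yields a fatgraph $G'$ with $b(G')=b(G)-1$. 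At each intermediate slide the four sectors surrounding the edge being moved should be traversed in the order characterizing a type $1$ or type $2$ move, keeping $\widetilde{N}=\Id$ at every step — and it is precisely this that must be checked against the six cases of Figure \ref{fig:GW1}.

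\emph{Conclusion and main obstacle.} Iterating the reduction, the strictly decreasing nonnegative integer $b(\cdot)$ reaches $0$ in finitely many steps, at which point $T_{G_k}$ is a line segment; concatenating the moves so produced gives the required sequence $\{W_i\colon G_{i-1}\ra G_i\}_{i=1}^{k}$, and since every $W_i$ is of type $1$ or $2$ we have $\widetilde{N}(W_i)=\Id$ for all $i$. The hard part will be the verification internal to the reduction step: I must show that the sliding of $S_2$ can always be realized so that each constituent Whitehead move is genuinely of type $1$ or type $2$ rather than one of the nontrivial types $3$--$6$, and that the net rearrangement removes the branching at $v$ without creating a new branch vertex elsewhere. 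This amounts to tracking how the boundary-cycle traversal order, and hence the greedy tree, evolves under each slide, and it is exactly here that the explicit local combinatorics of Figure \ref{fig:GW1} must be invoked.
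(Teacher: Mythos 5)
Your argument has a genuine gap, and you have located it yourself: the entire content of the lemma is the step you defer. Reducing the problem to ``splice $S_2$ onto $S_1$ at a deepest branch vertex by a sequence of type $1$/$2$ moves'' is not a reduction at all, because whether such a sequence of $\widetilde{N}$-trivial moves exists is precisely what must be proved; the induction on $b(G)$ is bookkeeping wrapped around a missing core. Worse, the mechanism you propose points in the wrong direction: sliding the attaching edge of $S_2$ ``along the boundary cycle, one edge at a time'' past endpoints of generators is exactly the chord-slide operation of Section \ref{chords}, and a slide over a chord changes the $\pi_1$-marking of the chord slid over (e.g.\ $\mb{d}\mapsto\mb{dc}$), so its constituent Whitehead moves are generically of types $3$--$6$, not $1$--$2$. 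This is unavoidable in your setting: the type of a move is dictated by the order in which the boundary cycle traverses the four sectors around the move edge, and at a deepest branch vertex --- which may occur arbitrarily late in the boundary traversal --- you have no control over that order. (Your side claim that types $3$--$6$ can never give the identity is also unnecessary; the lemma only needs the implication that type $1$ and $2$ moves satisfy $\widetilde{N}(W)=\Id$.)

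The paper obtains the needed control by choosing both a different monotone quantity and a different move edge. Let $S_G\subset T_G$ consist of the tree edges $s$ with $\mb{s}<\mb{x}$ for every $\mb{x}\in\mb{X}_G$: the initial run of tree edges met by the boundary cycle before \emph{any} generator, which is automatically a segment rooted at the tail. If $S_G\neq T_G$, one performs the single Whitehead move on an edge $e\in T_G-S_G$ adjacent to two edges of $S_G$. Because every edge of $S_G$ precedes every generator in the traversal, and because $e\in T_G$ forces $\mb{e}$ to point away from $S_G$, the boundary cycle must traverse first the sector containing $e$ (with $\mb{e}$ pointing away from it) and then the sector to the right of $\mb{e}$; by the classification of Figure \ref{fig:GW1} such a move is of type $1$ or $2$, hence $\widetilde{N}$-trivial, and it lengthens $S_G$ by one edge. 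Iterating until $S_{G_k}=T_{G_k}$ proves the lemma. The lesson for your write-up is that the induction must be driven from the frontier of the initial segment $S_G$, where the traversal order --- and hence the move type --- is pinned down for free, rather than from a branch vertex deep in the tree, where it is not.
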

\begin{proof}
Let $S_G \subset T_G$ be the subtree of $T_G$ defined by $s\in S_G$ if and only if $\mb{s}<\mb{x}$ for all $\mb{x}\in \mb{X}_G$, so $S_G$ is a line segment.  If $S_G=T_G$, then we are done, so assume otherwise.  Since $T_G$ is connected, there is an $e\in T_G-S_G$ which is adjacent to two edges of $S_G$, and since $e$ is in $T_G$, $\mb{e}$ must point away from $S_G$.  One can check that this dictates that the boundary cycle first traverses the sector containing $e$ so that that $\mb{e}$ points away from it and next traverses the sector to the right of $\mb{e}$.  As a result, the Whitehead move $W_e$ on $e$ must be  a move of type $1$ or $2$ so that $\widetilde{N}(W_e)=\Id$.
Moreover, under the move $W_e$, the length of  $S_G$ is increased by one.  By repeated application of this process, we obtain the desired sequence of moves resulting in a fatgraph $G_k$ with $T_{G_k}=S_{G_k}$.\hfill{}
\end{proof}

We let $C_G$ denote the fatgraph resulting from this procedure, which is called the {\it branch reduction algorithm}.

Recall from \cite{barnatan} that a \emph{linear chord} diagram is a segment in the real  line, called the \emph{core} of the diagram,  together with a collection of arcs, called the \emph{chords}, with endpoints attached to the core at distinct points. By identifying the subgraph  $T_{C_G}$ of $C_G$ with a portion of the real line, we see that the set $X_G$ can be viewed as a collection of chords attached to this core.  Strictly speaking however, this results in a diagram with the two right-most chords attached to the same point;  thus, in order to obtain a true chord diagram, we  add a  bivalent vertex to the right-most chord in $C_G$ and consider its     first half  as part of the core.  See Figure \ref{fig:obs}.

\begin{figure}[!h]
\begin{center}
\epsffile{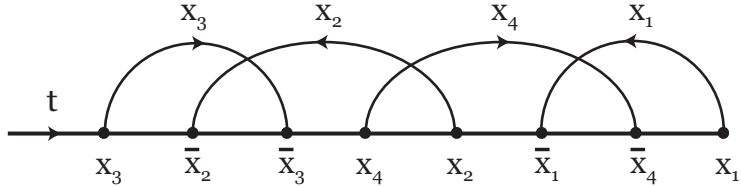}
\caption{Illustration of Observation \ref{word}.}
\label{fig:obs}
\end{center}
\end{figure}

We now make two observations:
\begin{observation}\label{word}  By repeated application of the orientation and vertex conditions, the word representing $\mb{t}$ in the letters $\mb{X}_G$  can be computed directly  from the chord diagram $C_G$.  Namely, by associating the element $\mb{\bar x}_i$ (respectively $\mb{ x}_i$) to the vertex $v(\mb{x}_i)$ (respectively $v(\mb{\bar x}_i)$), $\mb{t}$ is obtained by simply multiplying these elements in their left-to-right ordering along the core of $C_G$.  For example in Figure \ref{fig:obs}, we have $\mb{t}=\mb{x}_3\mb{\bar x}_2\mb{\bar x}_3\mb{x}_4\mb{x}_2\mb{\bar x}_1\mb{\bar x}_4\mb{x}_1$.
 \end{observation}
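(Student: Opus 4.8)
The plan is to compute the element $\mb{t}=\pi_1(\mb{t})$ directly from the combinatorics of $C_G$ by traversing its unique boundary cycle and using the edge condition $\pi_1(\mb{e})\pi_1(\mb{\bar e})=1$ together with the vertex conditions to eliminate every tree-edge contribution in favor of the generators $\mb{X}_G$. The key structural input is that $C_G$ is the output of the branch reduction algorithm, so by Lemma \ref{lem:chord} its maximal tree $T_{C_G}$ is a line segment. I would therefore present $C_G$ as a disk---a regular neighborhood of this core---with one band attached for each chord $\mb{x}_i$, the two feet of the band sitting at the vertices $v(\mb{x}_i)$ and $v(\mb{\bar x}_i)$ along the core. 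In this presentation $\partial\S1g$ runs once along the core and detours around each band, so that it meets the $4g$ chord feet in their spatial left-to-right order and records exactly one letter per foot; as $\pi_1(\mb{\bar t})$ is the class of $\partial\S1g$, reading this sweep in the appropriate direction yields the word for $\mb{t}$.

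To identify the individual letters I would work vertex by vertex. At a trivalent vertex $v$ let $\mb{p},\mb{q},\mb{r}$ be the three oriented edges pointing toward $v$ in their cyclic order, so the vertex condition reads $\mb{p}\,\mb{q}\,\mb{r}=1$, and the boundary cycle makes the three turns $\mb{p}\mapsto\mb{\bar q}$, $\mb{q}\mapsto\mb{\bar r}$, $\mb{r}\mapsto\mb{\bar p}$. Each turn rewrites the outgoing boundary edge as the product of the remaining (``third'') edge at $v$ with the incoming one; for instance $\mb{q}\mapsto\mb{\bar r}=\mb{p}\,\mb{q}$. At each interior core vertex exactly one of the three turns has the chord as its third edge, and that turn contributes a single generator, while the other two turns have core edges as their third edge and serve only to push the computation along the line. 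Tracking the orientation of the crossing shows that the foot at the head $v(\mb{x}_i)$ contributes $\mb{\bar x}_i$ and the foot at the tail $v(\mb{\bar x}_i)$ contributes $\mb{x}_i$, which is exactly the association in the statement; multiplying the contributions in the spatial order of the feet then gives the claimed word.

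The step I expect to be the main obstacle is the orientation and cancellation bookkeeping. One must check that at every trivalent vertex the two core-edge turns genuinely telescope against the corresponding turns at neighboring vertices, so that no tree letters survive in the final word, and that the convention assigning $\mb{\bar x}_i$ to the head and $\mb{x}_i$ to the tail is produced uniformly no matter which of the six local sector patterns of Figure \ref{fig:GW1} occurs at the vertex. Pinning down these signs---matching the cyclic order at each vertex against the direction in which $\partial\S1g$ crosses each band---is the only delicate point; once the conventions are fixed the induction along the core is routine, and the sample computation $\mb{t}=\mb{x}_3\mb{\bar x}_2\mb{\bar x}_3\mb{x}_4\mb{x}_2\mb{\bar x}_1\mb{\bar x}_4\mb{x}_1$ of Figure \ref{fig:obs} serves as a consistency check.
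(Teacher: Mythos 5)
Your proposal is correct and is essentially the paper's own (deliberately unwritten) argument: your ``turn'' identity $\mb{\bar r}=\mb{p}\,\mb{q}$ is precisely the ``repeated application of the orientation and vertex conditions'' that the Observation invokes, and your assignment of $\mb{\bar x}_i$ to the foot $v(\mb{x}_i)$ and $\mb{x}_i$ to the foot $v(\mb{\bar x}_i)$ is the right one. The bookkeeping obstacle you flag at the end is not actually there: no full boundary sweep and no cancellation of tree letters is ever needed, because at the $j$-th core vertex the vertex condition (with cyclic order: left core edge, right core edge, chord) reads $\mb{e}_{j-1}\mb{\bar e}_j\mb{u}_j=1$, i.e.\ $\mb{e}_{j-1}=\mb{\bar u}_j\,\mb{e}_j$ where $\mb{u}_j$ is the chord letter pointing into the core and $\mb{e}_{j-1},\mb{e}_j$ are the adjacent core edges with their preferred (rightward) orientations, so telescoping these identities left to right from $\mb{t}=\mb{e}_0$ produces exactly one chord letter per foot with no tree letters ever entering, the trailing core edge at the right end being itself the last chord letter by the paper's bivalent-vertex convention on the rightmost chord.
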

\begin{observation}
The word representing $\mb{t}$ obtained in this way is reduced since the fatgraph $C_G$ has only one boundary cycle.
\end{observation}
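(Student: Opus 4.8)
The plan is to prove the statement by its contrapositive, showing that any reduction in the word for $\mb{t}$ would force $C_G$ to have more than one boundary cycle. First I would recall from Observation \ref{word} that the word for $\mb{t}$ is read off by recording one letter at each chord endpoint as we traverse the core of $C_G$ from left to right, the letter at $v(\mb{x}_i)$ being $\mb{\bar x}_i$ and the letter at $v(\mb{\bar x}_i)$ being $\mb{x}_i$. Consequently two \emph{adjacent} letters of the word are mutually inverse precisely when the two corresponding consecutive endpoints along the core belong to the same chord $\mb{x}_i$; any two endpoints coming from distinct chords carry distinct indices and so cannot cancel. Thus the word is reduced if and only if no chord of $C_G$ has both of its endpoints adjacent along the core.

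Next I would analyze the local configuration produced by such an adjacent pair. If a chord $\mb{x}_i$ has its two endpoints $u$ and $w$ (namely the vertices $v(\mb{x}_i)$ and $v(\mb{\bar x}_i)$) consecutive on the core, then, the chord endpoints being distinct points of the core, there is a single core-edge $m$ joining $u$ to $w$, and together with $\mb{x}_i$ this forms a bigon based at the two trivalent vertices $u$ and $w$. I would then trace the boundary cycle through this bigon using the rule from Section \ref{sec:defns}, that an oriented edge arriving at a vertex is succeeded by the next edge in the counterclockwise cyclic order, oriented away from that vertex. Writing out the counterclockwise orderings at $u$ and $w$ dictated by the convention that chords lie above the core, one finds that the boundary path entering $w$ along the chord leaves along $m$, and upon entering $u$ along $m$ leaves again along the chord, so that it closes up into a cycle of length two using only $\mb{x}_i$ and $m$. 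This boundary cycle is disjoint from the rest of $C_G$; in particular it never meets the core-edges flanking $u$ and $w$, so $C_G$ would possess at least two boundary cycles.

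Since $C_G$ is a bordered fatgraph and hence has exactly one boundary cycle, no such adjacent pair of endpoints can occur; therefore no two adjacent letters of the word cancel, and the word for $\mb{t}$ is reduced. The one step requiring genuine care is the boundary-cycle trace: I must set up the counterclockwise cyclic orderings at the two trivalent vertices consistently with the stated orientation conventions (counterclockwise matching the plane, chords drawn above the core) and verify that the traced path truly closes into a separate two-edge cycle rather than rejoining the bulk of the graph. Once this local computation is in place, the contradiction with the one-boundary-cycle hypothesis is immediate and the remainder of the argument is purely formal.
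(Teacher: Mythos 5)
Your proposal is correct and is essentially the argument the paper intends: the paper offers no separate proof beyond the inline clause ``since $C_G$ has only one boundary cycle,'' and your contrapositive makes that reasoning explicit (a cancelling pair of adjacent letters forces a chord with adjacent endpoints, hence a bigon whose interior traversal closes up into a second boundary cycle). Your boundary-cycle trace through the bigon is consistent with the paper's counterclockwise conventions, so the argument is complete.
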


\begin{lemma}
The (marked) fatgraph $C_G$ obtained by the algorithm of Lemma \ref{lem:chord} is well-defined in the sense that if  $\mb{X}_G=\mb{X}_{G'}$, then $C_G=C_{G'}$.
\end{lemma}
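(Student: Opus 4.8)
The plan is to show that $C_G$, regarded as a marked fatgraph, is completely determined by the single reduced word $w_G$ that expresses $\mb{t}$ in the generators $\mb{X}_G$, and then to observe that this word depends on nothing but the ordered set $\mb{X}_G$. Granting both points the lemma is immediate: if $\mb{X}_G=\mb{X}_{G'}$, then the words agree, $w_G=w_{G'}$, and hence $C_G=C_{G'}$.

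For the first point I would establish the converse of Observation \ref{word}. That observation reads $w_G$ off the chord diagram $C_G$ by recording, while traversing the core from the univalent vertex rightward, the label $\mb{\bar x}_i$ at each vertex $v(\mb{x}_i)$ and $\mb{x}_i$ at each vertex $v(\mb{\bar x}_i)$; thus each generator contributes exactly two letters, one of each orientation. Running this backwards, the $k$th letter of $w_G$ names the generator and the orientation attached at the $k$th point of the core, while the two occurrences of $\mb{x}_i^{\pm 1}$ mark the two endpoints of the $i$th chord. Since a linear chord diagram is precisely a linearly ordered collection of endpoints paired into chords, and since the sign of each letter records (via the very rule of Observation \ref{word}, which encodes the vertex and orientation conditions) the cyclic ordering at the corresponding trivalent vertex, the entire marked fatgraph $C_G$ --- its core, its chords, its vertex cyclic orderings, and its $\pi_1$-labels on the generators --- is reconstructed canonically from $w_G$.

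For the second point, note that $w_G$ is independent of the particular fatgraph. By the geometricity condition $\pi_1(\mb{\bar t})$ is the class of $\partial\S1g$, so $\pi_1(\mb{t})$ is its inverse, a fixed element of $\pi_1$ common to every marked bordered fatgraph. Because $\mb{X}_G$ is a free basis of $\pi_1\cong F_{2g}$ by Corollary \ref{cor:explicitiso}, this fixed element admits a unique reduced expression in the basis $\mb{X}_G$, and by the second of the two observations above (that the word is reduced) this expression is exactly $w_G$. Hence $w_G$ is a function of the ordered basis $\mb{X}_G$ alone, which completes the argument.

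The main obstacle is the reconstruction in the first point: one must verify carefully that the bare word recovers not merely the pairing of chord endpoints but the full planar (fatgraph) structure, i.e.\ that a linear chord diagram carries no combinatorial information beyond the linearly ordered, signed sequence of its labeled endpoints. The remaining inputs --- the fatgraph-independence of $\pi_1(\mb{t})$ and the uniqueness of reduced words in $F_{2g}$ --- are routine.
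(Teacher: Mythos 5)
Your proof is correct and follows essentially the same route as the paper's: both reduce the claim to the uniqueness of reduced words in a free group, using Observation \ref{word} (and its converse, that the word reconstructs the diagram) together with the observation that the word for $\mb{t}$ is reduced and represents a fixed element of $\pi_1$. The only difference is one of exposition --- you spell out the reconstruction of $C_G$ from $w_G$, which the paper leaves implicit here and only makes explicit later in the proof of Proposition \ref{image}.
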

\begin{proof}
This follows from the above observations
since there is a unique reduced word representing any element of a free group with respect to a given set of generators.\hfill{}
\end{proof}

As a result of the previous two  lemmas, we have the following
\begin{proposition}\label{prop:kernel}
The kernel of the extension of $\widetilde{N}\colon {\mathfrak{Pt}}(\S1g)\ra \Aut(\pi_1)$ is generated by type 1 and type 2 moves, i.e., any element in the kernel of  $\widetilde{N}$ is equivalent under pentagon, commutativity, and involutivity relations to a composition of type 1 and 2 moves.
\end{proposition}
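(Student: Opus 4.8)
The plan is to reduce an arbitrary kernel element to the trivial loop at a canonical chord diagram by means of the branch reduction algorithm, and then to read off the conclusion. So let $\gamma$ be any morphism of ${\mathfrak{Pt}}(\S1g)$ with source $G_0$ and target $G_k$ (both marked) satisfying $\widetilde N(\gamma)=\Id$. First I would apply Lemma \ref{lem:chord} to each of the two endpoints, producing sequences $\alpha\colon G_0\ra C_{G_0}$ and $\beta\colon G_k\ra C_{G_k}$ built entirely out of type $1$ and type $2$ moves; by the proof of that lemma every constituent move $W_i$ satisfies $\widetilde N(W_i)=\Id$, so that $\widetilde N(\alpha)=\widetilde N(\beta)=\Id$ as well.

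Next I would form the concatenation $\delta=\alpha^{-1}\gamma\beta\colon C_{G_0}\ra C_{G_k}$. Since $\widetilde N$ is a representation and each of its three pieces maps to the identity automorphism, we obtain $\widetilde N(\delta)=\Id$. But by Corollary \ref{cor:autlift} the automorphism $\widetilde N(\delta)$ is exactly the one carrying the ordered generating set $\mb X_{C_{G_0}}$ of the source to the ordered generating set $\mb X_{C_{G_k}}$ of the target; hence $\widetilde N(\delta)=\Id$ forces the equality $\mb X_{C_{G_0}}=\mb X_{C_{G_k}}$ as ordered tuples of elements of $\pi_1$. The well-definedness of $C_G$ (established in the preceding lemma, which rests on Observation \ref{word} and the uniqueness of reduced words in a free group) then upgrades this to an equality of marked fatgraphs $C_{G_0}=C_{G_k}$.

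Thus $\delta$ is a morphism from the object $C_{G_0}$ to itself. Because $\mathcal T_{g,1}$ is connected and simply connected, any marked bordered fatgraph is joined to itself by a unique element of ${\mathfrak{Pt}}(\S1g)$, necessarily the identity; hence $\delta$ is equivalent, modulo the pentagon, commutativity, and involutivity relations, to the empty sequence. Substituting $\delta\equiv\Id$ into the identity $\gamma\equiv\alpha\,\delta\,\beta^{-1}$ yields $\gamma\equiv\alpha\,\beta^{-1}$. Finally, since the type of a Whitehead move is defined only up to inverse, the inverse of any type $1$ (respectively type $2$) move is again type $1$ (respectively type $2$), so $\beta^{-1}$, like $\alpha$, is a composition of type $1$ and type $2$ moves; therefore so is $\gamma$, which is the desired conclusion.

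I expect the only delicate step to be the passage from $\widetilde N(\delta)=\Id$ to the equality of the two reduced chord diagrams. This hinges on the fact that $\widetilde N$ of a composite literally records the change of canonical generating set, so that vanishing of the automorphism means the two ordered tuples coincide \emph{as elements of} $\pi_1$ rather than merely up to some further automorphism; once this is in hand, the rigidity supplied by the well-definedness of $C_G$ closes the argument. Everything else amounts to bookkeeping of the composition order and the symmetry of the six move types under inversion.
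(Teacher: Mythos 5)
Your proof is correct and follows essentially the same route as the paper's: both rest on Lemma \ref{lem:chord}, the well-definedness of $C_G$, and the uniqueness of morphisms in ${\mathfrak{Pt}}(\S1g)$. The only cosmetic difference is that you conjugate $\gamma$ into a loop $\delta$ at the chord diagram and kill $\delta$ by uniqueness, whereas the paper deduces $\mb{X}_{G_0}=\mb{X}_{G_k}$ (hence $C_{G_0}=C_{G_k}$) directly from $\widetilde N(\gamma)=\Id$ and then applies uniqueness to identify $\gamma$ with $\alpha\,\beta^{-1}$ outright.
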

\begin{proof}
Consider  any sequence  $\{W_i\}_{i=1}^k$ of Whitehead moves from $G_0$ to $G_k$ with corresponding composition
 $\widetilde{N}(W_k)\cdots \widetilde{N}(W_1)\in \Aut(\pi_1)$ equal to the identity.   By definition, this implies that $\mb{X}_{G_0}=\mb{X}_{G_k}$.  Using the previous two lemmas, there exists two sequences of Whitehead moves comprised solely of type $1$ or $2$ moves  connecting $G_0$ and $G_k$ respectively  to $C_{G_0}=C_{G_k}$.    The composition of the first such  sequence and the inverse of the second is equivalent modulo relations to $\{W_i\}_{i=1}^k$  since  there exists a unique element  of the Ptolemy groupoid connecting any two marked bordered fatgraphs.
~~~~~ \hfill{}
\end{proof}

\section{Chord slide algorithm and the image of $\widetilde{N}$}\label{chorddia}

In this section, we introduce an algorithm which produces a path in the mapping class groupoid from any bordered fatgraph to a fixed ``symplectic basepoint.'' As a consequence, we obtain an extension of the identity representation $id\colon MC(\S1g)\ra MC(\S1g)$ of the mapping class group.  Similarly in the next section, we will apply this algorithm in several guises to extend various representations.  

\subsection{Chord diagrams and the chord slide algorithm}
\label{chords}
We begin with an algorithm for linear chord diagrams described in \cite{barnatan} in terms of  ``chord slides''.   Let $C_G$ be the chord diagram associated to a bordered fatgraph $G$ and let $c$ and $d$ be two chords of $C_G$ so that the  endpoint $v(\mb{c})$ of $c$ immediately precedes  $v(\mb{d})$ in the left-to-right ordering along the core of $C_G$.   We define the \emph{slide} of $v(\mb{c})$ along $d$ to be the composition  of a Whitehead move on the edge $e$ of the core separating $v(\mb{c})$ and $v(\mb{d})$ followed by the Whitehead move on the chord $d$.   Similarly, we define the slide of $v(\mb{d})$ along $c$ to be the Whitehead move on $e$ followed by the Whitehead move on $c$.  Note that as the notation suggests, the result of the two moves is to slide the vertex along the boundary cycle so that it is adjacent to the opposite vertex of the chord along which it was slid.

 A marking of the bordered fatgraph $G$ induces a marking of the fatgraph $C_G$, and under a slide, the markings of all chords remain fixed except for the chord upon which the slide was performed.  For example, under the slide of  $v(\mb{c})$ along $d$ as discussed above, the marking of the oriented chord $\mb{d}$ changes  from $\mb{d}$  to $\mb{dc}$.  However, note that the effect on the linearly ordered set of generators $\mb{X}_G$ is more complicated as the ordering of the elements as well as their preferred orientations may change under such a slide.

\begin{figure}[!h]
\begin{center}
\epsffile{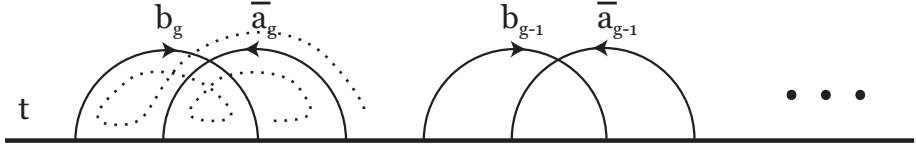}
\caption{Symplectic  chord diagram.}
\label{fig:sld}
\end{center}
\end{figure}

 Now, define the genus $g$ {\it symplectic chord diagram} to be the unique genus $g$ fatgraph $S$ such that $C_S=S$ and for any marking of $S$, $\mb{t}=\prod_{i=g}^{1}[\mb{x}_{2i},\mb{\bar x}_{2i-1}]$ with $\mb{X}_S=(\mb{x}_1, \ldots, \mb{x}_{2g})$.   We have depicted such a fatgraph in Figure \ref{fig:sld} where we have used the labels $\mb{b}_{i}=\mb{x}_{2i}$ and $\mb{\bar a}_{i}=\mb{x}_{2i-1}$ so that $\mb{t}=\prod_{i=g}^{1}[\mb{b}_{i},\mb{a}_{i}]$.  

 The \emph{chord slide algorithm} can now be described as follows.  Given a chord diagram $C_G$ associated to a fatgraph $G$, label the left-most chord of $C_G$ by $b_g$ and label the left-most chord which crosses $b_g$ by $a_g$ (note that such a chord must exist).  Next, sequentially slide all endpoints of chords (other than $b_g$ and $a_g$) which lie between the leftmost endpoint of $b_g$ and the rightmost endpoint of $a_g$ along the path represented by the dotted line in Figure \ref{fig:sld} so that all endpoints of chords lie to the right of $b_g$ and $a_g$.  Next, label the left-most chord appearing after $b_g$ and $a_g$ by $b_{g-1}$ and label the left-most chord which crosses $b_{g-1}$ by $a_{g-1}$.  Repeating this procedure, we eventually obtain a fatgraph isomorphic to $S$, cf. \cite{barnatan}.

\begin{theorem}\label{thm:mclift}
There is an explicit  extension 
\[
\widetilde{id}\colon {\mathfrak{Pt}}(\Sigma_{g,1})\ra MC(\Sigma_{g,1})
\]
to the Ptolemy groupoid
of the identity homomorphism of $MC(\Sigma_{g,1})$.

\end{theorem}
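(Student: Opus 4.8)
The plan is to use the chord slide algorithm to collapse every object of ${\mathfrak{Pt}}(\S1g)$ onto the fixed symplectic basepoint by a canonical path, and then to read off the value of $\widetilde{id}$ as the mapping class measuring the discrepancy between the two resulting markings of the symplectic chord diagram $S$. Concretely, for each marked bordered fatgraph $(G,f)$ I would let $\rho_{(G,f)}$ denote the element of ${\mathfrak{Pt}}(\S1g)$ obtained by lifting, through the unambiguous evolution of markings under Whitehead moves, the combinatorial sequence consisting of the branch reduction of Lemma \ref{lem:chord} (carrying $G$ to $C_G$) followed by the chord slide algorithm (carrying $C_G$ to $S$). The crucial feature to isolate first is that the underlying sequence of Whitehead moves in $\rho_{(G,f)}$ depends only on the combinatorial type of $G$ and not on the marking $f$; its terminal object is $(S,f_{(G,f)})$ for a marking $f_{(G,f)}$ of $S$ determined by $(G,f)$.

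Next I would define the extension. Given any morphism $\omega\colon (G,f)\ra (G',f')$ of ${\mathfrak{Pt}}(\S1g)$, the concatenation $\rho_{(G,f)}^{-1}\cdot\omega\cdot\rho_{(G',f')}$ is a morphism of ${\mathfrak{Pt}}(\S1g)$ from $(S,f_{(G,f)})$ to $(S,f_{(G',f')})$ between two markings of the single combinatorial object $S$. Since $MC(\S1g)$ acts freely and transitively on the markings of $S$, such a morphism represents a unique mapping class, and I set $\widetilde{id}(\omega)$ equal to it, namely the unique $\phi\in MC(\S1g)$ with $f_{(G',f')}=\phi\circ f_{(G,f)}$. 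Functoriality is then a telescoping identity: for composable $\omega_1\colon (G,f)\ra(G',f')$ and $\omega_2\colon(G',f')\ra(G'',f'')$ one inserts $\rho_{(G',f')}\cdot\rho_{(G',f')}^{-1}$ to write the defining loop of $\widetilde{id}(\omega_1\omega_2)$ as the product of the defining loops of $\widetilde{id}(\omega_1)$ and $\widetilde{id}(\omega_2)$, so that $\widetilde{id}$ respects groupoid composition (with the same order convention as in the proof of Theorem \ref{thm:canAutlift}).

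It remains to check that $\widetilde{id}$ restricts to the identity on $MC(\S1g)$. A mapping class $\phi$ is represented by a morphism $\omega_\phi\colon (G_0,f_0)\ra(G_0,\phi\circ f_0)$ whose source and target are combinatorially identical. Because the chord slide path depends only on combinatorial type, $\rho_{(G_0,f_0)}$ and $\rho_{(G_0,\phi\circ f_0)}$ consist of the same Whitehead moves; and because the $MC(\S1g)$-action is by post-composition and commutes with the evolution of markings, applying $\phi$ to the initial marking applies $\phi$ to the terminal one, giving $f_{(G_0,\phi\circ f_0)}=\phi\circ f_{(G_0,f_0)}$. Hence $\widetilde{id}(\omega_\phi)=\phi$, as desired, and the construction is explicit since each $\rho_{(G,f)}$ is produced by the stated algorithms. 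The main obstacle is not the existence of the extension but the two equivariance-type facts that make it canonical and functorial: that the branch reduction and chord slide sequences are determined by the combinatorial type of the fatgraph alone, and that marking evolution under Whitehead moves is equivariant for the post-composition action of $MC(\S1g)$. The former rests on the termination of the chord slide algorithm at $S$ recalled above from \cite{barnatan}, while the latter is what ultimately forces the restriction of $\widetilde{id}$ to be the identity rather than some twist of it.
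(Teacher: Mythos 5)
Your proposal is correct and follows essentially the same route as the paper: both apply the branch reduction algorithm followed by the chord slide algorithm to carry each marked fatgraph to a marked copy of the symplectic chord diagram $S$, and then define $\widetilde{id}$ as the unique mapping class relating the two resulting markings, using the free and transitive action of $MC(\Sigma_{g,1})$ on markings. Your version simply makes explicit what the paper leaves as ``by construction'' — namely the combinatorial (marking-independent) nature of the reduction paths, the telescoping functoriality check, and the equivariance argument showing the restriction to $MC(\Sigma_{g,1})$ is the identity.
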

\begin{proof}
Consider a Whitehead move $W\colon G_1\ra G_2$ on a marked fatgraph $G_1$. 
  Let $S_1$ and $S_2$ be the respective marked symplectic chord diagrams obtained from $G_1$ and $G_2$ by performing the branch reduction algorithm followed by the chord slide algorithm.  Since $S_1$ and $S_2$ are isomorphic as unmarked fatgraphs, there exists a unique element $\varphi$ of $MC(\S1g)$ such that $\varphi(S_1)=S_2$, and we define  $\widetilde{id}(W)=\varphi$. 
  This gives a well-defined map $\widetilde{id}\colon  {\mathfrak{Pt}}(\Sigma_{g,1})\ra MC(\S1g)$ which extends the identity homomorphism by construction.  \hfill{}
\end{proof}

Note that if we fix a marking $S\hra \S1g$ for the symplectic chord diagram $S$, a modification of the proof actually provides a representation 
$\widetilde{id}\colon \mf{MC}(\Sigma_{g,1})\ra MC(\Sigma_{g,1})$ of the mapping class groupoid.
Also note that by considering the mapping class group $MC(\S1g)$ as a subgroup of $Aut(\pi_1)$, the theorem provides yet another extension of Nielsen's embedding;  
however, this extension has the disadvantage that it no longer depends on six essential cases.  
  
By combining Theorem \ref{thm:mclift} and the action of $MC(\S1g)$ on the first integral homology $H=H_1(\S1g,\bZ)$ of $\S1g$, we immediately obtain 

\begin{corollary}\label{uglycor}
There is an explicit canonical extension 
  \begin{equation}\label{eq:actiononH}
\tilde \tau_0\colon {\mathfrak{Pt}}(\S1g)\ra Sp(H).
  \end{equation}
of the symplectic representation of $MC(\S1g)$.
\end{corollary}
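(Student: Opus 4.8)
The plan is to obtain $\tilde\tau_0$ as a composition of two maps already in hand. Recall that the symplectic representation $\tau_0\colon MC(\S1g)\to Sp(H)$ is the group homomorphism recording the action of a mapping class on the first integral homology $H=H_1(\S1g,\bZ)$; this action preserves the intersection pairing and hence lands in $Sp(H)\cong Sp(2g,\bZ)$. On the other hand, Theorem \ref{thm:mclift} furnishes the groupoid representation $\widetilde{id}\colon{\mathfrak{Pt}}(\S1g)\to MC(\S1g)$ extending the identity homomorphism. I would simply set
\[
\tilde\tau_0=\tau_0\circ\widetilde{id}\colon{\mathfrak{Pt}}(\S1g)\to Sp(H).
\]

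First I would verify that $\tilde\tau_0$ is again a groupoid representation, which is immediate: the composition of a groupoid representation with a group homomorphism respects composition. Concretely, for composable Whitehead-move sequences $U_1$ and $U_2$ one has $\widetilde{id}(U_2\circ U_1)=\widetilde{id}(U_2)\circ\widetilde{id}(U_1)$ in $MC(\S1g)$ by Theorem \ref{thm:mclift}, and applying the homomorphism $\tau_0$ to both sides preserves this identity. Canonicity of $\tilde\tau_0$ is inherited from that of $\widetilde{id}$, since no auxiliary choices enter beyond those fixed by the branch reduction and chord slide algorithms.

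Next I would check that $\tilde\tau_0$ genuinely extends $\tau_0$. Any $\varphi\in MC(\S1g)$ is represented by a sequence of Whitehead moves from $(G_0,f_0)$ to $(G_0,\varphi\circ f_0)$, as in Section \ref{sec:defns}. Since $\widetilde{id}$ extends the identity homomorphism, this sequence is carried by $\widetilde{id}$ to $\varphi$, whence $\tilde\tau_0$ carries it to $\tau_0(\varphi)$, exactly as required.

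I expect no essential obstacle here beyond the content of Theorem \ref{thm:mclift}, since the corollary is a formal composition. The one point demanding care is the ordering convention: as noted in the proof of Theorem \ref{thm:canAutlift}, concatenation of Whitehead-move sequences and composition of the associated group elements run in opposite directions, so this convention must be fixed consistently so that $\tilde\tau_0$ is a representation rather than an anti-representation. Explicit formulae for $\tilde\tau_0(W)$ on a single Whitehead move then follow by reading off the mapping class $\widetilde{id}(W)$ via the chord slide algorithm and computing its induced action on $H$ in the symplectic basis supplied by the generators $\mb{X}_S$ of the symplectic chord diagram.
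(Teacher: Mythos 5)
Your proposal is correct and is essentially identical to the paper's own argument: the corollary is stated there as an immediate consequence of Theorem \ref{thm:mclift} combined with the action of $MC(\S1g)$ on $H$, i.e.\ precisely the composition $\tilde\tau_0=\tau_0\circ\widetilde{id}$ that you write down. Your additional remarks on the ordering convention and on how explicit formulae arise from the branch reduction and chord slide algorithms are consistent with, and slightly more detailed than, what the paper records.
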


\subsection{The Image of $\widetilde{N}$}\label{sect:image}

We conclude  this  section by   describing the image of the extension of the Nielsen embedding.   This image cannot be all of  $\Aut(F_{2g})$ as the combinatorics of bordered fatgraphs put limitations on which sets of generators for $F_{2g}$ can arise from the greedy algorithm.  For example, due to the preferred orientation of edges, if $\mb{X}_G$ is a set of generators for $G$, then the set obtained from $\mb{X}_G$ by replacing $\mb{x}_i$ with $\mb{\bar x}_i$ for some $i$ cannot arise from a marked bordered fatgraph.

More generally, we have the following result, which implicitly describes the image of the extension of the Nielsen embedding.

\begin{proposition}\label{image}
After some number of replacements $\mb{x}\mapsto\mb{\bar x}$,  a set of generators $\mb{X}$  of $\pi_1$ arises as the set $\mb{X}_G$ of a generators for a marked bordered fatgraph $G$ if and only if the  element of $\pi_1$ representing $\partial\S1g$ can be written as a reduced word which contains each element $\mb{x}$ of  $\mb{X}$ and its inverse $\mb{\bar x}$ exactly once.  Moreover, the set of replacements $\mb{x}\mapsto\mb{\bar x}$ performed on $\mb{X}$ is uniquely determined and explicitly computable.
\end{proposition}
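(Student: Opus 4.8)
The strategy is to run the dictionary of Observation \ref{word} between chord diagrams and reduced words in both directions: forwards for necessity and backwards for sufficiency. For necessity, suppose that after some replacements $\mb{x}\mapsto\mb{\bar x}$ the set $\mb{X}$ coincides with $\mb{X}_G$ for a marked bordered fatgraph $G$. I would first apply the branch reduction algorithm of Lemma \ref{lem:chord} to pass to the associated chord diagram $C_G$; since every move there has type $1$ or $2$, so that $\widetilde N=\Id$, the ordered generating set is unchanged and $\mb{X}_{C_G}=\mb{X}_G$. Observation \ref{word} then exhibits $\pi_1(\mb{t})$, hence $\partial\S1g=\pi_1(\mb{\bar t})=\pi_1(\mb{t})^{-1}$, as the word read off the chord endpoints along the core of $C_G$. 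Because each chord contributes precisely its two endpoints, this word contains each generator and its inverse exactly once, and it is reduced because $C_G$ has a single boundary cycle. The inverse of such a word is again reduced and again contains each generator and inverse once, so $\partial\S1g$ admits the asserted presentation.

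For sufficiency, I would reverse this recipe. Let $w$ be the (unique) reduced word representing $\partial\S1g$ in which each $\mb{x}\in\mb{X}$ and each $\mb{\bar x}$ occurs once. Set $\mb{t}=w^{-1}$, place its $4g$ letters as endpoints along a core segment in their left-to-right order, join for each generator $\mb{x}$ the two endpoints carrying the letters $\mb{x}$ and $\mb{\bar x}$ by a chord, and orient that chord so that $v(\mb{x})$ is the endpoint carrying $\mb{\bar x}$, exactly as prescribed by Observation \ref{word}; place the univalent vertex at the left end and make the bivalent-vertex adjustment of Section \ref{sect:kernel} at the right. The point to verify is that this linear chord diagram $C$ is a genuine bordered fatgraph, i.e. that it has a single boundary cycle. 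This is where reducedness of $w$ is used: since each generator appears once with each sign, the absence of a cancelling adjacent pair is equivalent to $w$ being reduced, and one checks that the boundary traversal of $C$ reproduces $w$ and closes into exactly one cycle precisely when no such cancellation occurs. With $n=1$ and $2g$ chords, the Euler characteristic count forces genus $g$. Since the core is already a maximal tree and the chords are its complementary edges, the greedy algorithm applied to $C$ returns the chords as generators, so $\mb{X}_C$ agrees with $\mb{X}$ after the replacements $\mb{x}\mapsto\mb{\bar x}$.

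For the final clause, the preferred orientation of each chord is forced by the convention $\mb{x}<\mb{\bar x}$ together with the assignment of Observation \ref{word} that places the letter $\mb{\bar x}$ at $v(\mb{x})$; thus for each generator exactly one of its two occurrences in $w$ pins down the preferred orientation, and whether a given $\mb{x}\in\mb{X}$ must be replaced by $\mb{\bar x}$ is read off directly and deterministically from $w$. This makes the set of replacements both unique and explicitly computable. I expect the main obstacle to be the single-boundary-cycle verification in the sufficiency direction, since this is the step that converts the algebraic reducedness hypothesis into the topological statement that $C$ is a valid bordered fatgraph of the correct genus, and it is where the endpoint and bivalent-vertex conventions must be handled with care.
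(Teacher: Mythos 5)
The necessity direction of your proposal and the final uniqueness clause are sound and essentially match the paper. The genuine gap is in the sufficiency direction, at precisely the step you flag as "the main obstacle": your claimed equivalence that the constructed diagram $C$ "closes into exactly one cycle precisely when no cancellation occurs" is false. Reducedness of $w$ is a purely local condition (no adjacent cancelling pair), whereas the number of boundary cycles of a chord diagram is governed by the global crossing pattern of the chords (concretely, by the rank mod $2$ of their intersection matrix). For instance, the word $w=\mb{x}_1\mb{x}_4\mb{x}_2\mb{\bar x}_1\mb{x}_3\mb{\bar x}_2\mb{\bar x}_4\mb{\bar x}_3$ is reduced (even cyclically) and contains each letter and its inverse exactly once, yet its chord diagram, whose crossing graph is a $4$-cycle, has three boundary cycles rather than one. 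So no local "check" can establish the single-boundary-cycle property; it genuinely requires the hypothesis that $w$ represents $\partial\S1g$ with respect to a generating set of $\pi_1$, a hypothesis your argument never actually invokes in this direction.

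The paper closes this gap with a nontrivial group-theoretic argument. If $C$ had $2n+1$ boundary cycles with $n>0$ (oddness follows from an Euler characteristic count), then a sequence of Whitehead moves takes $C$ to a ``$(2n,g-n)$-caravan'' as in \cite{barnatan}, and reading the tail of that diagram expresses the boundary class as a product of only $g-n$ commutators in some free generating set of $\pi_1$. Since the boundary class is also a product of $g$ commutators in a basis, Proposition 6.8 of \cite{LS} (the minimal number of commutators needed to express such an element is exactly $g$) forces $g-n\geq g$, a contradiction. This input—converting the topological assumption of extra boundary cycles into a forbidden low-genus commutator expression of $\partial\S1g$—is the missing idea; without it, or something equivalent, the sufficiency direction does not go through.
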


\begin{proof}
We employ a construction which is essentially the reverse of  Observation \ref{word} to build a  chord diagram from the word $w$ representing the class of the boundary in the letters $\mb{X}$.  Begin with a straight line segment  and $2g$ oriented chords labeled by $\mb{X}$ and then attach the ends of the edges to the line according  to the appearance of the corresponding letters in $w$ as in Observation \ref{word} to obtain a fatgraph $C$  with tail $\mb{t}$ (oriented pointing to the right).  The orientations of the chords $\mb{X}$ endow $C$ with a surjective $\pi_1$-marking such that $\pi_1(\mb{t})$ is the class of the boundary by construction. 

If $C$ has only one boundary cycle, then it  is a bordered genus $g$ fatgraph which endows the elements of  $\mb{X}$ with preferred orientations.  Thus, after replacing some $\mb{x}$ with their inverses according to their preferred orientations, we have realized $\mb{X}$ as the set of generators $\mb{X}_C$ for $C$  as required.

In order to derive a contradiction, now assume that the fatgraph $C$ has more than one boundary cycle.  By an Euler characteristic argument, this number must be odd, say $2n+1$ with $n>0$.    Note that the oriented chords of $C$ still endow $C$ with an abstract (but not geometric) surjective $\pi_1$-marking.  By the transitivity of Whitehead moves, there exists a sequence of moves which takes this fatgraph $C$ to a chord diagram $C'$ with tail with $2n$ isolated chords followed on the right by a genus $g-n$ symplectic chord diagram.  (See \cite{barnatan} for an explicit algorithm which is a generalization of  the chord slide algorithm, where the resulting diagram is called a ``$(2n,g-n)$-caravan''.)

 We again denote the tail of $C'$ by $\mb{t}$  since its value in $\pi_1$ remains fixed under any sequence of Whitehead moves.  If we then label the oriented chords of $C'$ (in their right-to-left appearance) by $\{\mb{\bar a}'_i,\mb{b}'_i\}_{i=1}^{g}$,  this provides a set of generators of $\pi_1$.  The contributions of the isolated chords $\{\mb{\bar a}'_i,\mb{b}'_i\}_{i=g-n+1}^{g}$ to the word representing $\mb{\bar t}$ in these letters  cancel so that  $\mb{\bar t}=\prod_{i=1}^{g-n}[\mb{a}'_i,\mb{b}'_i]$ as a word in these letters.  However, we can always find a  set of generators $\{\mb{\bar a}_i,\mb{b}_i\}_{i=1}^g$ for which  $\mb{\bar t}=\prod_{i=1}^{g}[\mb{a}_i,\mb{b}_i]=\prod_{i=1}^{g-n}[\mb{a}'_i,\mb{b}'_i]$.  By Proposition 6.8 of \cite{LS}, we must have $g-n\geq g$, a contradiction as required.
\end{proof}

\section{The symplectic   representation}

Just as for $\pi_1$-markings, the greedy algorithm applied to a geometrically $H$-marked bordered fatgraph $G$ results in a canonical linearly ordered basis $H(\mb{X}_G)$ of $H$.   We call  a basis of $H$ arising in this way for some marked bordered fatgraph $G$ a \emph{geometric basis} of $H$.  A geometric  basis $H(\mb{X}_G)=\{X_1,X_2, \ldots,X_{2g}\}$ has the property that  $X_i\cdot X_j$ equals -1 only if $i<j$, while it equals 1 only if $i>j$.  Thus, the intersection matrix of $\mb{X}_G$ is given by a skew symmetric $2g$-by-$2g$ matrix with only $0$'s and $1$'s below the diagonal.

 Fix a rank $2g$ symplectic vector space $(V,\omega)$.  Recall that a standard integral symplectic basis for  $(V,\omega)$ is a basis $\{A_i,B_i\}_{i=1}^{g}$ for $V$ such that the symplectic pairing $\omega$ takes values $\omega(A_i,B_j)=\delta_{ij}$ and $\omega(A_i,A_j)=\omega(B_i,B_j)=0$, for all $i,j$.
 
 While a standard symplectic basis of $H$ is not quite a geometric basis, any geometric $H$-marking of the symplectic chord diagram $S$ provides a geometric basis which differs from a symplectic one only in the signs of half of its elements.  In this way, any such basis provides a symplectic  isomorphism 
 $
 H\cong (V,\omega)
 $. By applying the branch reduction and chord slide algorithms, we thus obtain the following (cf. Corollary \ref{cor:explicitiso})
 \begin{corollary}\label{cor:explicitHiso}
For every $H$-marked bordered fatgraph $G$, there is  an explicit canonical integral symplectic basis for $H$, thus a canonical symplectic  isomorphism $H\cong (V,\omega)$.
\end{corollary}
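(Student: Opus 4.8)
The plan is to transport the given $H$-marking to the distinguished symplectic chord diagram $S$, where a geometric basis is symplectic up to prescribed signs, and then simply read off the isomorphism. First I would apply the branch reduction algorithm of Lemma~\ref{lem:chord} to the $H$-marked fatgraph $G$ to obtain the chord diagram $C_G$, and then the chord slide algorithm of Section~\ref{chords} to obtain a marked fatgraph isomorphic to the symplectic chord diagram $S$. Since $H$-markings evolve unambiguously under Whitehead moves and both algorithms are deterministic, $S$ inherits a canonical geometric $H$-marking depending only on $G$; I write its ordered set of generators as $\mb{X}_S=(\mb{x}_1,\ldots,\mb{x}_{2g})$ and its geometric basis as $H(\mb{X}_S)$.

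Next I would read a symplectic basis off the defining property of $S$. By definition $\mb{t}=\prod_{i=g}^{1}[\mb{b}_i,\mb{a}_i]$ with $\mb{b}_i=\mb{x}_{2i}$ and $\mb{\bar a}_i=\mb{x}_{2i-1}$, so setting $A_i:=H(\mb{a}_i)=-H(\mb{x}_{2i-1})$ and $B_i:=H(\mb{b}_i)=H(\mb{x}_{2i})$ yields a basis of $H$ differing from $H(\mb{X}_S)$ only by the sign flips on the $g$ elements $H(\mb{x}_{2i-1})$, that is, on half of the basis, exactly as in the discussion preceding the statement. I would then verify that $\{A_i,B_i\}_{i=1}^{g}$ is a standard integral symplectic basis, using the $H$-geometricity condition $\inprod{\mb{x}}{\mb{y}}=H(\mb{x})\cdot H(\mb{y})$ together with the cyclic order in which the letters of $\prod[\mb{b}_i,\mb{a}_i]$ appear along the boundary cycle of $S$: this gives $A_i\cdot B_j=\delta_{ij}$ and $A_i\cdot A_j=B_i\cdot B_j=0$ for all $i,j$. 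Sending each $A_i$ and $B_i$ to the corresponding vector of the fixed standard basis of $(V,\omega)$ then defines the required symplectic isomorphism $H\cong(V,\omega)$, and this isomorphism is canonical and explicit because the greedy algorithm, the (well-defined) passage to $C_G$, and the chord slide algorithm all depend only on $G$.

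This argument is formally parallel to Corollary~\ref{cor:explicitiso}: there the greedy basis furnished generators of $\pi_1$ and the Hopfian property supplied $\pi_1\cong F_{2g}$, whereas here the greedy basis of the target $S$ supplies, after the prescribed half-set of sign changes, a symplectic basis of $H$ and hence $H\cong(V,\omega)$. The only point needing genuine care is the sign bookkeeping in the verification that $\{A_i,B_i\}$ is symplectic: one must confirm that flipping the signs of precisely the $\mb{a}$-half of the geometric basis of $S$ converts its intersection matrix (skew symmetric with $0$'s and $1$'s below the diagonal) into the standard symplectic form, which I would check by evaluating the skew pairing $\inprod{\cdot}{\cdot}$ directly on the commutator word $\prod_{i=g}^{1}[\mb{b}_i,\mb{a}_i]$.
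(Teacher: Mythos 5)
Your proposal matches the paper's own argument: the paper likewise obtains the corollary by applying the branch reduction and chord slide algorithms to reach the symplectic chord diagram $S$, whose induced geometric $H$-basis differs from a standard integral symplectic basis only by sign flips on half of its elements (the $-A_i$'s). Your extra verification of the symplectic pairing via the $H$-geometricity condition on the commutator word is a detail the paper leaves implicit, but it is the same route, correctly executed.
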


  Given any two  symplectic bases $\mc{B}=\{A_i,B_j\}$ and $\mc{B}'=\{A'_i,B'_j\}$ of a symplectic vector space, the linear map taking $A_i\mapsto A'_i$ and $B_j\mapsto B'_j$  lies in $Sp(2g,\bZ)$.  Thus, completely analogously to Theorem \ref{thm:canAutlift} by combining Corollary \ref{cor:explicitHiso} and \eqref{eq:actiononH}, we obtain the following
  \begin{theorem}\label{thm:canHlift}
There is an explicit extension
\[
\hat{\tau}_0\colon \mf{MC}(\S1g)\ra Sp(2g,\bZ)
\]
to the mapping class groupoid 
of the symplectic representation 
of the mapping class group.
\end{theorem}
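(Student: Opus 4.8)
The plan is to mimic the proof of Theorem~\ref{thm:canAutlift} line for line, replacing the canonical identification $\pi_1\cong F_{2g}$ of Corollary~\ref{cor:explicitiso} by the canonical symplectic identification $H\cong(V,\omega)$ of Corollary~\ref{cor:explicitHiso}. To each $H$-marked trivalent bordered fatgraph $G$ I would attach the canonical standard integral symplectic basis $\mc{B}_G$ of $H$ furnished by Corollary~\ref{cor:explicitHiso}, equivalently the symplectic isomorphism $\phi_G\colon H\ra(V,\omega)$ carrying $\mc{B}_G$ to a fixed standard basis of $V$. For a Whitehead move $W\colon G\ra G'$ I would then define
\[
\hat\tau_0(W)=\phi_{G'}\circ\phi_G^{-1}\in Sp(V)=Sp(2g,\bZ);
\]
this lands in $Sp(2g,\bZ)$ precisely by the remark preceding the statement that the change between two standard symplectic bases is symplectic and integral, and it is nothing other than the element $\tilde\tau_0(W)$ of \eqref{eq:actiononH} read off in the canonical bases.

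The groupoid law is then formal. For composable moves $W_1\colon G\ra G_1$ and $W_2\colon G_1\ra G_2$ the defining expressions telescope,
\[
\hat\tau_0(W_2)\circ\hat\tau_0(W_1)=\bigl(\phi_{G_2}\circ\phi_{G_1}^{-1}\bigr)\circ\bigl(\phi_{G_1}\circ\phi_G^{-1}\bigr)=\phi_{G_2}\circ\phi_G^{-1}=\hat\tau_0(W_1\circ W_2),
\]
so composition is respected with the same reversal of order as in Theorem~\ref{thm:canAutlift}, and the pentagon, commutativity, and involutivity relations hold automatically since any two representative sequences share the same source and target bases.

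The only step demanding genuine verification, and the one I expect to be the crux, is that $\hat\tau_0(W)$ depends solely on the underlying combinatorics of $W$ and not on the chosen $H$-marking, which is exactly what permits the descent from ${\mathfrak{Pt}}(\S1g)$ to the mapping class groupoid $\mf{MC}(\S1g)$. The point is the naturality of the canonical symplectic basis under the mapping class action: because the branch reduction and chord slide algorithms select their moves from the combinatorics of the fatgraph alone while the $H$-marking merely rides along, replacing a marking $f$ by $\varphi\circ f$ for $\varphi\in MC(\S1g)$ transforms every vector of $\mc{B}_{(G,f)}$ by the homological action $\varphi_*$, so that $\mc{B}_{(G,\varphi f)}=\varphi_*\,\mc{B}_{(G,f)}$ and hence $\phi_{(G,\varphi f)}=\phi_{(G,f)}\circ\varphi_*^{-1}$. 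Applying this at both ends of $W$ makes the two $\varphi_*$ factors cancel in $\phi_{G'}\circ\phi_G^{-1}$, establishing marking-independence; I would verify this cancellation carefully, as it is the heart of the argument.

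Finally, the same naturality shows that $\hat\tau_0$ extends the symplectic representation: a loop of Whitehead moves carrying $(G_0,f_0)$ to $(G_0,\varphi\circ f_0)$ and representing $\varphi\in MC(\S1g)$ telescopes to $\phi_{(G_0,\varphi f_0)}\circ\phi_{(G_0,f_0)}^{-1}$, which by the displayed equivariance is governed by the homological action $\varphi_*$ expressed in the basis $\mc{B}_{G_0}$, recovering the image $\tau_0(\varphi)$ of $\varphi$ under the symplectic representation (up to the standard reversal of composition order). Explicitness is inherited from the two algorithms together with the six-case formulae for $\widetilde{N}$, whose abelianizations determine the action on the geometric basis underlying $\mc{B}_G$; assembling these yields the explicit extension $\hat\tau_0\colon\mf{MC}(\S1g)\ra Sp(2g,\bZ)$ asserted in the theorem.
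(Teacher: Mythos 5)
Your proposal is correct and follows essentially the same route as the paper: the paper's own (very terse) proof is precisely to combine the canonical symplectic isomorphism $H\cong(V,\omega)$ of Corollary \ref{cor:explicitHiso} with the extension \eqref{eq:actiononH}, ``completely analogously to Theorem \ref{thm:canAutlift},'' so that $\hat\tau_0(W)$ is the integral symplectic change of basis between the canonical bases attached to $G$ and $G'$. Your telescoping verification of the groupoid law and the equivariance argument $\mc{B}_{(G,\varphi f)}=\varphi_*\,\mc{B}_{(G,f)}$ for descent from ${\mathfrak{Pt}}(\S1g)$ to $\mf{MC}(\S1g)$ are exactly the details the paper leaves implicit in that analogy.
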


\subsection{The rational algorithm}

One may be interested to know if an extension of the symplectic representation with target  $Sp(2g,\bZ)$ can be obtained through more algebraic methods.  Here we describe such an approach which uses only linear algebra and the $H$-markings of bordered fatgraphs.  The new ingredient is  to provide a different but analogous isomorphism to that provided by Corollary \ref{cor:explicitHiso}.  While the following method  only works over the rationals for generic bases of $H$, it in fact is an integral algorithm for geometric bases since it can be realized by certain ``dual chord slides'' as shown in \cite{bene}.

Consider  an ordered geometric basis $H(\mb{X}_G)$ of $H$ and let $A_1 = X_1$.
Let $i\geq 2$ be minimal, such that $X_1\cdot
X_i \neq 0$, and renumber the $X_j$, for $j\geq 2$ by
interchanging $X_2$ and $X_i$. Let $b_1 = \frac1{X_1\cdot X_2}X_2$ and  define
$$X'_j = X_j - (X_j\cdot B_1)A_1 + (X_j\cdot A_1)B_1,$$
for $j\geq 3$.

By repeating this process on the 
ordered set $(X'_3,\ldots, X'_{2g})$ of independent vectors
in $H\otimes \bQ$, we eventually arrive at a symplectic basis of $H\otimes \bQ$.  By the result of \cite{bene}, this basis is in fact integral, and we have defined another $MC(\S1g)$-equivariant map from geometric to symplectic bases of $H$, thus also another extension of the symplectic representation.

\section{Other identity extensions}

In analogy to the extension of the identity representation given in Theorem \ref{thm:mclift}, we conclude by describing two other extensions of identity representations: one for the 
Torelli group $\mc{I}(\S1g)$ and one for the subgroup
$MC(\Lambda)$ of mapping classes preserving the Lagrangian $\Lambda< H$, i.e., $\Lambda$ is a maximal isotropic subspace.

\begin{theorem}\label{thm:Blift}
Given any geometric  basis  $\mc{B}$ for $H$, there is an explicit extension 
\[
\widetilde{id}_{\mc{B}} \colon {\mathfrak{Pt}}(\Sigma_{g,1})\ra \mc{I}(\S1g)
\]
 to the Ptolemy  groupoid of the identity homomorphism of the Torelli group, which is natural in the sense that
if $\phi\in MC(\S1g)$ then 
$$\widetilde{id}_{{\mathcal B}} (W:G\to G') = \phi^{-1}~\bigl [\widetilde{id}_{\phi({\mathcal B})}(\phi(W):\phi(G)\to\phi (G'))\bigr ] ~\phi.$$
\end{theorem}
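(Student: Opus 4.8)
The plan is to follow the template of Theorem~\ref{thm:mclift} but to correct the resulting mapping class by a canonical ``symplectic factor'' read off from the fixed geometric basis $\mathcal{B}$, so that the output is forced to lie in $\mc{I}(\S1g)=\ker\tau_0$. Concretely, I would first attach to every marked bordered fatgraph $G$ a canonical mapping class $\gamma_{\mathcal{B}}(G)\in MC(\S1g)$ whose action on homology $\tau_0(\gamma_{\mathcal{B}}(G))$ is the unique symplectic transformation carrying the geometric basis $\beta_G$ produced by the greedy algorithm (Corollary~\ref{cor:explicitHiso}) to the reference basis $\mathcal{B}$. The canonicity is the essential point: such a $\gamma_{\mathcal{B}}(G)$ would be obtained not merely as some lift of a symplectic map, but as an honest sequence of Whitehead moves (equivalently, dual chord slides) realizing the change of basis $\beta_G\to\mathcal{B}$ on the symplectic chord diagram, exactly as in the rational algorithm described above and in \cite{bene}; running this deterministic algorithm returns a loop at the symplectic chord diagram and hence a well-defined element of $MC(\S1g)$ with no Torelli ambiguity.

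Granting $\gamma_{\mathcal{B}}$, I would then simply set
\[
\widetilde{id}_{\mathcal{B}}(W\colon G\to G')=\gamma_{\mathcal{B}}(G')\,\widetilde{id}(W)\,\gamma_{\mathcal{B}}(G)^{-1},
\]
where $\widetilde{id}$ is the extension of Theorem~\ref{thm:mclift}. Three of the four required properties are then formal. That $\widetilde{id}_{\mathcal{B}}$ is a groupoid homomorphism is immediate from this telescoping form together with the fact that $\widetilde{id}$ respects composition, the intermediate correction factors cancelling along a composite path. That it lands in $\mc{I}(\S1g)$ follows by applying $\tau_0$: since $\tau_0(\widetilde{id}(W))=\tilde\tau_0(W)$ carries $\beta_G$ to $\beta_{G'}$ (Corollary~\ref{uglycor}), the composite $\tau_0(\gamma_{\mathcal{B}}(G'))\,\tilde\tau_0(W)\,\tau_0(\gamma_{\mathcal{B}}(G))^{-1}$ traces out $\mathcal{B}\to\beta_G\to\beta_{G'}\to\mathcal{B}$ and is therefore the identity, so $\widetilde{id}_{\mathcal{B}}(W)\in\ker\tau_0$.

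For the remaining two properties I would isolate the single equivariance statement $\gamma_{\phi(\mathcal{B})}(\phi(G))=\phi\,\gamma_{\mathcal{B}}(G)\,\phi^{-1}$ for $\phi\in MC(\S1g)$; this holds because the greedy algorithm, the symplectic chord diagram, and the change-of-basis algorithm are all natural under the $MC(\S1g)$-action on markings, and because $\phi(\beta_G)=\beta_{\phi(G)}$. Combined with the analogous equivariance $\widetilde{id}(\phi(W))=\phi\,\widetilde{id}(W)\,\phi^{-1}$ coming from Theorem~\ref{thm:mclift}, substituting into the definition yields the stated naturality formula directly. The identity-extension claim is then the special case of a loop representing $\varphi\in\mc{I}(\S1g)$ from $(G_0,f_0)$ to $(G_0,\varphi\circ f_0)$: here $\varphi(\mathcal{B})=\mathcal{B}$ since $\varphi$ is Torelli, so the equivariance gives $\gamma_{\mathcal{B}}(\varphi\cdot G_0)=\varphi\,\gamma_{\mathcal{B}}(G_0)\,\varphi^{-1}$, and the definition collapses to $\varphi\,\gamma_{\mathcal{B}}(G_0)\,\varphi^{-1}\cdot\varphi\cdot\gamma_{\mathcal{B}}(G_0)^{-1}=\varphi$, as needed.

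The hard part will be the construction of $\gamma_{\mathcal{B}}(G)$ with no residual ambiguity: any two mapping classes realizing the symplectic map $\beta_G\to\mathcal{B}$ differ by a Torelli element, and such an ambiguity would alter $\widetilde{id}_{\mathcal{B}}(W)$ rather than wash out, so I must verify that the dual-chord-slide algorithm genuinely pins down one canonical representative and that this representative is natural under the $MC(\S1g)$-action. Establishing these two facts, canonicity and equivariance of $\gamma_{\mathcal{B}}$, is where essentially all of the work lies; once they are in hand, the four assertions of the theorem follow by the formal manipulations above.
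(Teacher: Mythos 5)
Your formal scaffolding is, once unwound, the paper's own proof in a different packaging. The paper defines $\widetilde{id}_{\mc{B}}(W\colon G\to G')$ by running on both $G$ and $G'$ a single deterministic algorithm (branch reduction, then the chord slide algorithm, then a further chord-slide stage) terminating at a symplectic chord diagram whose $H$-marking is precisely $\mc{B}$, and taking the unique mapping class comparing the two endpoint markings. Your element $\gamma_{\mc{B}}(G')\,\widetilde{id}(W)\,\gamma_{\mc{B}}(G)^{-1}$ is exactly that comparison element, with $\gamma_{\mc{B}}(G)$ the mapping class defined by the second stage of the algorithm: if $\alpha(S_1)=C_1$, $\beta(S_2)=C_2$ and $\psi(S_1)=S_2$, then $\beta\psi\alpha^{-1}$ carries $C_1$ to $C_2$, so the two definitions agree by the freeness of the $MC(\S1g)$-action on markings. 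Your telescoping argument for functoriality, the computation that the image lies in $\ker\tau_0$ because it fixes the basis $\mc{B}$, and the derivation of both naturality and the identity-extension property from equivariance of $\gamma_{\mc{B}}$ are all correct, and correspond to the paper's remark that naturality is ``a tautology tantamount to the existence of the algorithm.''

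The gap is exactly where you locate it, but the tool you invoke to close it does not do the job. The rational algorithm is a symplectic Gram--Schmidt process: applied to a geometric basis it produces \emph{some} integral symplectic basis, and it is blind to the prescribed target $\mc{B}$; neither it nor the dual chord slides of \cite{bene} that implement it realize the specific change of basis $\beta_G\to\mc{B}$, so citing them does not produce $\gamma_{\mc{B}}(G)$. What is actually needed --- and what constitutes the mathematical content of the paper's proof --- is twofold: first, a lemma realizing by explicit chord slides a generating collection of integral symplectic transformations of the $H$-marking of a symplectic chord diagram (the transvections $A_i\mapsto A_i\pm B_i$ and $B_i\mapsto B_i\pm A_i$, the quarter-rotation $A_i\mapsto B_i\mapsto -A_i$, handle swaps, and mixed handle moves); second, the steering algorithm of Lemma \ref{lemA4}, a homological division algorithm which, for any primitive integral $v\in H$, outputs a deterministic sequence of such slides making the leftmost chord $H$-marked by $v$, and which is then iterated handle-by-handle (after correcting the unique chord overlapping the leftmost one by finitely many transvections) until the entire marking equals $\mc{B}$. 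Determinism of every step --- each move is dictated by the integer coefficients of the current $H$-marking --- is what eliminates the Torelli ambiguity you flag, and equivariance is then automatic because the algorithm consumes only $H$-marking data, which transforms naturally under $MC(\S1g)$. Without constructing these two ingredients, $\gamma_{\mc{B}}$ does not exist, and the formal manipulations, though valid, have nothing to act on.
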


\begin{theorem}\label{thm:Llift}
Given any  integral Lagrangian subspace $\Lambda$ of $H$, there is an explicit extension 
\[
\widetilde{id}_\Lambda\colon {\mathfrak{Pt}}(\Sigma_{g,1})\ra MC(\Lambda)
\]
to the Ptolemy groupoid of the identity homomorphism of $MC(\Lambda)$, which is natural in the sense
that  if $\phi\in MC(\S1g)$, then 
$$\widetilde{id}_{{\Lambda}} (W:G\to G') =\phi^{-1}~ \bigl [\widetilde{id}_{\phi (\Lambda)}(\phi(W):\phi(G)\to\phi (G'))\bigr ]
~\phi.$$
\end{theorem}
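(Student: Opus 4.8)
The plan is to exploit the fact that, because $\mathcal{T}_{g,1}$ is connected and simply connected, ${\mathfrak{Pt}}(\S1g)$ is the codiscrete groupoid on the set $\mathcal{F}$ of marked bordered fatgraphs: there is a \emph{unique} morphism between any two objects. Consequently, giving a representation $\widetilde{id}_\Lambda\colon {\mathfrak{Pt}}(\S1g)\ra MC(\Lambda)$ extending the identity of $MC(\Lambda)$ is the same as giving an $MC(\Lambda)$-equivariant map $\rho_\Lambda\colon \mathcal{F}\ra MC(\Lambda)$ (where $MC(\Lambda)$ acts on $\mathcal{F}$ by post-composition of markings and on itself by left translation), via the coboundary formula
\[
\widetilde{id}_\Lambda(W\colon G\ra G')=\rho_\Lambda(G')\,\rho_\Lambda(G)^{-1}.
\]
Such an assignment automatically respects composition (with the right-to-left convention of Theorem \ref{thm:canAutlift}), and for $\phi\in MC(\Lambda)$ the equivariance $\rho_\Lambda(\phi\circ f)=\phi\,\rho_\Lambda(f)$ forces $\widetilde{id}_\Lambda(G\ra\phi(G))=\phi$, so the identity is extended. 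This viewpoint already subsumes Theorems \ref{thm:mclift} and \ref{thm:Blift}, where $MC(\Lambda)$ is replaced by all of $MC(\S1g)$ and by $\mc{I}(\S1g)$; the whole task is to produce an \emph{explicit} such $\rho_\Lambda$ that is moreover natural in $\Lambda$.

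First I would reduce to the symplectic basepoint. By Theorem \ref{thm:mclift} the branch reduction and chord slide algorithms supply an $MC(\S1g)$-equivariant map $\kappa\colon \mathcal{F}\ra MC(\S1g)$, namely the unique element carrying a fixed reference marking of the symplectic chord diagram $S$ to the reduction of $G$, so that $\widetilde{id}(W)=\kappa(G')\kappa(G)^{-1}$. Passing to homology by $\tau_0$, the induced geometric basis $H(\mb{X}_G)$ is recorded by $\tau_0\kappa(G)\in Sp(H)$, which is exactly $\tilde\tau_0$ of Corollary \ref{uglycor}. Since $\kappa$ lands in all of $MC(\S1g)$, to push it into $MC(\Lambda)=\tau_0^{-1}\bigl(Sp(H)_\Lambda\bigr)$ I must correct by the part of its homological action that moves $\Lambda$.

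The key new ingredient is a \emph{$\Lambda$-adapted symplectic straightening}. I would run the integral algorithm of Corollary \ref{cor:explicitHiso}, in the guise of the dual chord slides of \cite{bene} described in the rational algorithm above, but relative to $\Lambda$: starting from $H(\mb{X}_G)$, perform a symplectic Gram--Schmidt that first produces a symplectic subbasis spanning $\Lambda$ before completing it, yielding a canonical $\Lambda$-adapted integral symplectic basis together with an explicit $s_\Lambda(G)\in MC(\S1g)$ — realized by an actual sequence of (dual) chord slides, hence a genuine mapping class — whose homological action carries the standard $\Lambda$-adapted basis to $H(\mb{X}_G)$. Setting
\[
\rho_\Lambda(G)=\kappa(G)\,s_\Lambda(G)^{-1},
\]
one checks that $\tau_0\rho_\Lambda(G)$ stabilizes $\Lambda$, so $\rho_\Lambda(G)\in MC(\Lambda)$; and since the straightening depends only on the $Sp(H)_\Lambda$-orbit of the data, $\rho_\Lambda$ is $MC(\Lambda)$-equivariant, exactly as in the Torelli case where the analogous correction instead lands in $\mc{I}(\S1g)=\Ker\tau_0$.

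The representation property and the extension of the identity are then formal, by the first paragraph. Naturality in $\Lambda$ reduces to the fact that every ingredient — the reduction $\kappa$, the homology functor $\tau_0$, and the straightening $s_\Lambda$ — is built $MC(\S1g)$-equivariantly, with $\Lambda$ entering only through the stabilizer condition; concretely $\rho_{\phi(\Lambda)}(\phi(G))=\phi\,\rho_\Lambda(G)$, which upon substitution into the coboundary formula yields precisely $\widetilde{id}_\Lambda(W)=\phi^{-1}\bigl[\widetilde{id}_{\phi(\Lambda)}(\phi(W))\bigr]\phi$. I expect the main obstacle to be the straightening step: one must select the $\Lambda$-adapted symplectic basepoint canonically from $\Lambda$ alone and, crucially, realize the change of basis \emph{integrally} by chord slides so that $s_\Lambda(G)$ is a mapping class rather than a mere element of $Sp(H)_\Lambda$ — this integrality being exactly what the results of \cite{bene} are designed to guarantee — while keeping the whole construction equivariant in $\Lambda$ so that the stated naturality holds.
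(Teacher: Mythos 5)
Your general framework is sound and matches the paper's underlying strategy: since ${\mathfrak{Pt}}(\S1g)$ has a unique morphism between any two objects, it suffices to normalize each marked fatgraph equivariantly and take coboundaries, with the reduction $\kappa$ of Theorem \ref{thm:mclift} as the first step. The fatal gap is in your correction term $s_\Lambda(G)$. You impose two requirements on it: (a) $\tau_0 s_\Lambda(G)$ carries the fixed basis $\mc{B}^{\mathrm{std}}_\Lambda$ to $H(\mb{X}_G)$ (strictly, to the basis recorded by the reduction of $G$, but this does not affect the point), and (b) $s_\Lambda(G)$ depends only on the $Sp(H)_\Lambda$-orbit of the data, which is what your equivariance argument uses. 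These are incompatible: for $\phi\in MC(\Lambda)$ one has $H(\mb{X}_{\phi(G)})=\tau_0(\phi)\bigl(H(\mb{X}_G)\bigr)$, so (a) forces $\tau_0 s_\Lambda(\phi(G))=\tau_0(\phi)\,\tau_0 s_\Lambda(G)\neq \tau_0 s_\Lambda(G)$ whenever $\tau_0(\phi)\neq\Id$, contradicting (b) (and $MC(\Lambda)$ surjects onto $Sp(H)_\Lambda\neq 1$). Moreover, under (a) the assertion that ``one checks that $\tau_0\rho_\Lambda(G)$ stabilizes $\Lambda$'' is simply false. For a genus one example, take $\mc{B}_S=\{A,B\}$, $\Lambda=\langle A+2B\rangle$, $\mc{B}^{\mathrm{std}}_\Lambda=\{-B,\,A+2B\}$, and $G$ the reference itself, so that $\kappa(G)=1$ and $H(\mb{X}_G)=\{A,B\}$; then $\tau_0\rho_\Lambda(G)=\bigl(\tau_0 s_\Lambda(G)\bigr)^{-1}$ sends $A+2B\mapsto -B+2(A+2B)=2A+3B\notin\Lambda$. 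Pinning $s_\Lambda$ by a full-basis condition rigidifies too much: if instead you place the correction on the other side, $\rho=s_\Lambda^{-1}\kappa$, then $\tau_0\rho$ becomes constant in $G$, the coboundary lands in $\Ker\tau_0=\mc{I}(\S1g)$, and you have merely reproduced Theorem \ref{thm:Blift}; such a map cannot extend $\id_{MC(\Lambda)}$, since a loop representing $\phi\in MC(\Lambda)$ with $\tau_0(\phi)\neq\Id$ would be sent to a homologically trivial class, which cannot equal $\phi$.

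The missing idea is that the normalization must fix only the Lagrangian, not a symplectic basis: the basepoint locus must be an entire $MC(\Lambda)$-orbit---in the paper, the set of marked symplectic chord diagrams whose $b_i$-chords have $H$-markings spanning $\Lambda$---so that the coboundary is forced to preserve $\Lambda$ while remaining free to act nontrivially on it (as elements of $MC(\Lambda)$ do). Concretely, the paper runs branch reduction and the chord slide algorithm and then repeatedly applies Lemma \ref{lemA4} with vectors $v\in\Lambda$ chosen canonically via the integral filtration $W_{2i-1}=\Lambda\cap\mathrm{span}(A_1,B_1,\ldots,A_i)$, $W_{2i}=\Lambda\cap\mathrm{span}(A_1,B_1,\ldots,B_i)$: the minimal nonempty $W_j$ is one-dimensional and contains a unique integral basis vector. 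Comparing the outputs of this algorithm for $G$ and $G'$ yields a mapping class carrying one $\Lambda$-adapted diagram to another, hence carrying the span of the $b$-markings (which is $\Lambda$) to itself, so it lies in $MC(\Lambda)$; naturality is then a tautology of the construction's canonicity. Your ``Gram--Schmidt relative to $\Lambda$'' gestures at this, but it supplies neither the canonical selection of vectors in $\Lambda$ (the $W_j$ filtration, which is exactly the paper's ``only truly new ingredient'') nor a normalization condition compatible with both $\Lambda$-stabilization and $MC(\Lambda)$-equivariance.
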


The proofs are quite similar and given or sketched in the next section after first developing the requisite tools here.  Just as the proof of Theorem \ref{thm:mclift} involved an algorithm which took any bordered fatgraph to a fixed ``symplectic basepoint'' bordered fatgraph, the above theorems are similarly based on an algorithm which takes any geometrically $H$-marked bordered fatgraph to a fixed  $H$-marked bordered fatgraph, i.e., a fixed ``symplectic basepoint'' in the Torelli groupoid.  Also as in Theorem \ref{thm:mclift}, if we fix a marked bordered fatgraph with corresponding geometric basis $\mc{B}$,  then Theorem \ref{thm:Blift} in fact leads to a representation of the Torelli groupoid in $\mc{I}(\S1g)$.  Similarly, fixing a marked bordered fatgraph,  the representation of Theorem \ref{thm:Llift} can also be extended to the Torelli groupoid.  

\subsection{Homology markings and chord slides}

Under a chord slide, the $H$-marking of a linear fatgraph evolves in a simple way: up to sign and permutation, all $H$-markings of chords are fixed except the one being slid over, which is modified by adding or subtracting the $H$-marking of the slid chord.  For example, consider the chord slide of Figure \ref{fig:moveexample}, where we begin with an isolated pair of overlapping chords with $H$-markings $B_i$ and $-A_i$.  
  When the left end of the $B_i$-marked chord is slid along the $-A_i$-marked chord, we obtain a new isolated pair of overlapping chords which are $H$-marked $A_i+B_i$ and $B_i$ as in the figure.   Thus, this chord slide corresponds to the transformation 
 \[
 A_i\mapsto -B_i, \quad B_i\mapsto A_i+B_i,
 \]
 which is easily seen to be a symplectic transformation.
   \begin{figure}[h!]
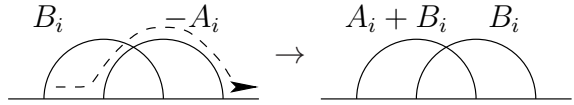

 \[
 \begin{array}{c}\input{moveii.pstex_t}\end{array}  \ra \begin{array}{c}\input{moveiiresult.pstex_t}\end{array}
 \]
 \caption{Evolution of $H$-marking under a chord slide.}\label{fig:moveexample}
 \end{figure}

 More generally, we have the 
\begin{lemma}
Assume that $C$ is a symplectic  chord diagram with chords $H$-marked by the  basis $\{-A_i,B_i\}$, $1\le i\le g$.  Then  the following elements of $Sp(2g,\bZ)$  can be realized in terms of chord slides ($1\le i\ne j\le g$):
 \begin{enumerate}
 \item[i$^{\pm}$)] $A_i\mapsto A_i \pm  B_i$.
  \item[ii$^{\pm}$)] $B_i\mapsto B_i  \pm A_i$.
 \item[iii)] $A_i\mapsto B_i\mapsto -A_i$.  
 \item[iv)] $A_i\mapsto A_j\mapsto A_i$, $B_i\mapsto B_j\mapsto B_i$, 
 \item[v$^{\pm}$)] $A_i\mapsto A_i \pm  A_{i+1}$, $B_{i+1}\mapsto B_{i+1}\mp B_i$.
 \end{enumerate}
\end{lemma}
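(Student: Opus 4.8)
The plan is to reduce every one of the five families to the single elementary computation already performed in Figure~\ref{fig:moveexample}, used as an atomic building block, and then to assemble the listed maps as short sequences of such slides. Throughout I record the effect of a sequence of slides as the associated change of geometric basis, i.e.\ as the symplectic matrix expressing the new ordered basis $H(\mb{X}_{G'})$ (read off by re-running the greedy algorithm on the fatgraph produced by the slides) in terms of the original $\{-A_i,B_i\}$. Because each slide is a change of \emph{geometric} basis, the resulting matrix automatically lies in $Sp(2g,\bZ)$, so the only content is to exhibit the correct matrices. The key mechanism to keep in mind is that, although the marking rule changes only the slid-over chord additively, re-extracting the geometric basis may permute the chords and flip their preferred orientations (as already noted for $\mb{X}_G$ in Section~\ref{chords}); it is precisely this re-extraction that produces changes in the \emph{dual} vectors and makes each slide symplectic.

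First I would dispose of the intra-pair cases (i$^{\pm}$), (ii$^{\pm}$), and (iii). Each acts as the identity on every pair other than the $i$th and so lies in the copy of $SL_2(\bZ)$ acting on $\langle A_i,B_i\rangle$. The basic slide of Figure~\ref{fig:moveexample} realizes there the order-six element $\left(\begin{smallmatrix}0&1\\-1&1\end{smallmatrix}\right)$ ($A_i\mapsto-B_i$, $B_i\mapsto A_i+B_i$); performing the analogous slide on the other chord of the pair (equivalently, sliding the opposite endpoint) supplies a second elementary matrix, and one checks that the two together generate all of $SL_2(\bZ)$. Each of (i$^{\pm}$), (ii$^{\pm}$), (iii) is then a short word in these generators, realized by the corresponding short sequence of intra-pair slides; since no chord outside the $i$th pair is moved, all other basis vectors are fixed.

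Next I would treat the adjacent cross-pair family (v$^{\pm}$). The relevant move is a single slide of an endpoint of the $A_i$-chord over the $A_{i+1}$-chord, which by the additive marking rule changes that marking to $A_i\pm A_{i+1}$ while leaving the other markings literally unchanged. The companion change $B_{i+1}\mapsto B_{i+1}\mp B_i$ is then \emph{forced}: a single additive change $A_i\mapsto A_i\pm A_{i+1}$ is not by itself symplectic, so upon re-extracting the geometric basis the unique symplectic completion of $A_i\mapsto A_i\pm A_{i+1}$ fixing $A_{i+1},B_i$ and the remaining basis vectors must appear, and one verifies directly from $\omega(A_i,B_i)=\omega(A_{i+1},B_{i+1})=1$ and $\omega(A_i,B_{i+1})=0$ that this completion is exactly $B_{i+1}\mapsto B_{i+1}\mp B_i$. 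This gives (v$^{\pm}$).

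Finally, the transposition (iv) need not require any new geometric move: interchanging two adjacent pairs, $A_i\leftrightarrow A_{i+1}$ and $B_i\leftrightarrow B_{i+1}$, can be written as a composition of the $A$-block transvections of (v$^{\pm}$) (in both index directions) together with the intra-pair sign flips from (iii), exactly as the signed swap $\left(\begin{smallmatrix}0&-1\\1&0\end{smallmatrix}\right)$ factors into three transvections in $SL_2(\bZ)$, with the $B$-block interchange carried along automatically by symplecticity; a general transposition then follows by writing it as a product of adjacent ones. The main obstacle will be the sign and ordering bookkeeping in the cross-pair slides: one must confirm that the ``$\pm$/$\mp$'' correlation in (v$^{\pm}$) comes out as stated rather than its opposite (this is dictated by the geometricity/intersection-sign conventions), that the re-extracted basis in (iv) reproduces the pure transposition after the sign corrections, and that every intermediate diagram remains a genuine bordered fatgraph with a single boundary cycle so that each slide is defined. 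Once these finitely many local sign checks are made, the remaining assembly into arbitrary $i,j$ and into (i)--(iii) is purely formal composition of symplectic matrices.
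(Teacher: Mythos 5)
Your treatment of cases (i$^{\pm}$), (ii$^{\pm}$), and (iii) is essentially sound and close in spirit to the paper's (which realizes these by explicit single intra-pair slides): the second intra-pair slide does give a transvection $B_i\mapsto B_i\pm A_i$, $A_i\mapsto A_i$, and together with the order-six matrix of Figure \ref{fig:moveexample} it generates the copy of $SL_2(\bZ)$ acting on $\langle A_i,B_i\rangle$ while fixing the other pairs. The genuine gap is in case (v$^{\pm}$), on which your case (iv) also depends. First, the move you invoke does not exist: in the symplectic chord diagram the endpoints along the core appear in the order $b,a,b,a$ within each pair, so an endpoint of the $a_i$-chord is never adjacent to an endpoint of the $a_{i+1}$-chord (it is always separated from it by $b$-chord endpoints), and chord slides are defined only for adjacent endpoints. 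Realizing (v) therefore requires a genuine sequence of slides --- this is exactly what the paper does, composing two explicit sequences --- and along such a sequence the markings of the intervening $b$-chords change and must be shown to be restored; that bookkeeping is the actual content of the case, not a ``local sign check.''

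Second, and independently, the ``forced by symplecticity'' step is false. The symplectic completion of $A_i\mapsto A_i\pm A_{i+1}$ fixing $A_{i+1}$, $B_i$, and all basis vectors outside the two pairs is \emph{not} unique: for every $\alpha\in\bZ$ the map sending $B_{i+1}\mapsto B_{i+1}\mp B_i+\alpha A_{i+1}$ (and otherwise as above) preserves the form, since $\omega(B_{i+1}\mp B_i+\alpha A_{i+1},\,A_i\pm A_{i+1})=\mp1\pm1=0$ and all other pairings are likewise unchanged. So symplecticity alone cannot pin down the companion change $B_{i+1}\mapsto B_{i+1}\mp B_i$; only explicit computation of a slide sequence can rule out the stray $\alpha A_{i+1}$ term. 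Worse, no single slide can induce (v) at all: a slide changes exactly one chord's $H$-marking additively and the rest only up to sign and permutation, whereas (v) requires additive changes in two basis vectors; indeed one checks that no symplectic map has the form ``$A_i\mapsto A_i\pm A_{i+1}$ together with signs and a permutation on the remaining vectors,'' so the single-slide scenario you describe is internally inconsistent. To repair (v), and hence your derivation of (iv) from it, you must exhibit explicit slide sequences and track their effect on the markings, as in the paper's proof.
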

\begin{proof}
The moves of types $i^+$ and $ii^+$ are provided by the following two chord slides 
 \[
  \begin{array}{c}\input{movei.pstex_t}\end{array} \begin{array}{c}\input{moveim.pstex_t}\end{array},
 \]
 and the moves of types $i^-$ and $ii^-$ are obtained similarly.  

The move of type $iii$ is obtained by combining moves of type $i^{\pm}$ and $ii^{\pm}$ with moves
similar to that illustrated in Figure \ref{fig:moveexample}.
The move of type $iv$ is provided by iterations of sequences of chord slides of the following type
 \[
 \begin{array}{c}\input{moveiii.pstex_t}\end{array}.
 \]
The move of type $v^+$ is provided by composing the following two sequences
 \[
 \begin{array}{c}\input{moveiv1.pstex_t}\end{array}~~~ \begin{array}{c}\input{moveiv2.pstex_t}\end{array}
 \]
 while type $v^-$ is similar.\hfill{}
\end{proof}

We now apply this lemma to prove the following
\begin{lemma}\label{lemA4}
There is an algorithm starting with any marked symplectic chord diagram $C\hra \S1g$ and any primitive integral vector  $v\in H$ (i.e., $v$ extends to an integral basis of $H$) which
produces a sequence of chord slides on $C$ resulting in a marked symplectic  chord diagram $C'$ with the leftmost chord $H$-marked by $v$.
\end{lemma}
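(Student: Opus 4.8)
The plan is to reduce the statement to the classical fact that $Sp(2g,\bZ)$ acts transitively on primitive integral vectors, realizing the requisite transformation by the chord slides catalogued in the previous lemma. Throughout I identify a marking of the symplectic chord diagram $C$ with the symplectic basis $\{-A_{i},B_{i}\}_{i=1}^{g}$ it determines, and I recall that each chord slide alters this basis by one of the transformations (i$^{\pm}$)--(v$^{\pm}$) while leaving the underlying unmarked fatgraph $S$ fixed, so that the output is again a symplectic chord diagram. If the leftmost chord of $C$ carries the marking $w_{0}=B_{g}$, then after a sequence of slides realizing a composite transformation $T$ the leftmost chord is marked $T(w_{0})$; thus it suffices to find a sequence of moves whose composite carries $w_{0}$ to $v$. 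Because every listed move together with its inverse is realizable---the $\pm$ variants, the order-four rotation (iii), and the involutive swap (iv) give a toolkit closed under inversion---I may instead run the algorithm backwards, reducing $v$ to $w_{0}$ and then applying the inverse chord slides to $C$.

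First I would write $v=\sum_{i=1}^{g}(\alpha_{i}A_{i}+\beta_{i}B_{i})$ in the basis determined by $C$, so that primitivity of $v$ means $\gcd(\alpha_{1},\beta_{1},\dots,\alpha_{g},\beta_{g})=1$. The reduction then proceeds in three phases. In Phase~1, for each $i$ the intra-pair transvections (i$^{\pm}$) and (ii$^{\pm}$) act on the coordinate pair $(\alpha_{i},\beta_{i})$ by the elementary steps $(\alpha_{i},\beta_{i})\mapsto(\alpha_{i},\beta_{i}\mp\alpha_{i})$ and $(\alpha_{i},\beta_{i})\mapsto(\alpha_{i}\mp\beta_{i},\beta_{i})$, so the ordinary Euclidean algorithm (together with (iii) to place the result in the $A$-slot) reduces $(\alpha_{i},\beta_{i})$ to $(d_{i},0)$ with $d_{i}=\gcd(\alpha_{i},\beta_{i})$; after this phase $v$ lies in the span of the $A_{i}$. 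In Phase~2, since $v$ now has no $B$-component, the cross-pair move (v$^{+}$) acts on the $A$-coordinates of adjacent pairs by $(d_{i},d_{i+1})\mapsto(d_{i},d_{i+1}-d_{i})$, its coupled effect on the $B$-coordinates being inert, while the swap (iv) brings any two pairs into adjacency; the Euclidean algorithm on $(d_{1},\dots,d_{g})$ therefore collapses it to a single entry equal to $\gcd(d_{1},\dots,d_{g})=\gcd(\alpha_{1},\beta_{1},\dots,\alpha_{g},\beta_{g})=1$, so $v$ becomes a single basis vector $\pm A_{k}$. In Phase~3, the rotation (iii) and swaps (iv) convert $\pm A_{k}$ into $w_{0}=B_{g}$, completing the reduction of $v$ to $w_{0}$.

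The main obstacle is the bookkeeping that ties this abstract reduction of coordinate vectors to honest chord slides: one must track how each realized move permutes the physical positions of the chords and alters their preferred orientations (the signs encoded by the $\{-A_{i},B_{i}\}$ convention), and confirm that the inverse slides indeed install a chord marked $v$ as the leftmost chord of a bona fide symplectic chord diagram. The fact that the procedure terminates at a \emph{unit} vector, rather than at $d\cdot(\text{basis vector})$ for some $d>1$, is exactly where the hypothesis that $v$ is primitive enters, through the two nested gcd computations of Phases~1 and~2. Granting the previous lemma, the realization of each elementary step as a chord slide is routine, so the genuine content of the proof lies in organizing these steps into a terminating algorithm and in the sign-and-position accounting just described.
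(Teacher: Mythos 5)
Your proposal is correct and takes essentially the same route as the paper's own proof, which likewise runs a two-stage Euclidean/gcd reduction on the coordinates of $v$ in the evolving basis --- first within each pair $(A_i,B_i)$ using type $i$ and $iii$ moves, then across pairs using type $iv$ and $v$ moves --- with primitivity of $v$ forcing the final coefficient to equal $1$, and type $iv$ moves placing the resulting chord leftmost. The only cosmetic differences are that the paper works forward on the diagram (normalizing into $B$-coordinates) rather than reducing $v$ backwards to $w_0$ in $A$-coordinates, and it phrases the primitivity step as ``$v$ and $B'''_i$ are both integral basis elements'' rather than via nested gcd computations.
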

\begin{proof}
Let $\mathcal{B}=\{A_i,B_i\}_{1\le i\le g}$ be the symplectic basis of $H$ given by the marking of $C$.  Since $v$ is integral, we have
\[
v=\sum_{i=1}^g  c_i A_i + d_i B_i, \quad c_i,d_i\in \bZ.
\]
By applying a sequence of type {\it iii} moves, we can assume that all $c_i,d_i\geq 0$.   

Next by  applying a sequence of type $i$ moves according to a ``homological division algorithm,'' we can obtain a new geometric basis $\mathcal{B'}$ such that either $c'_i=0$ or $d'_i=0$ for all $i=1,\ldots, g$.  For example, if $d_i\geq m_ic_i$, we would begin by applying a type $i^+$ move $m_i$ times to reduce the coefficient $d_i$ of $B_i$ by $m_i c_i$.  After completing this process, we can apply several type $iii$ moves to obtain a basis $\mc{B''}$ with all $c''_i=0$, so that

\[
v=\sum_{i=1}^g d''_i B''_i.
\]

Next by applying a similar division algorithm using type $iv$ and $v$ moves, we obtain a basis $\mathcal{B'''}=\{A'''_i,B'''_i\}_{1\le i\le g}$ with 
\[
v=d'''_i B'''_i,
\]
for some $i$.  
Since both $v$ and $B'''_i$ are integral basis elements, we must have $d'''_i=1$.  By applying type $iv$ moves, we can finally arrange that  $i=1$, as required. 
\end{proof}

\subsection{Proofs of Theorem \ref{thm:Blift} and \ref{thm:Llift}.}

For the proof of Theorem \ref{thm:Blift}, we devise an algorithm which will take any marked fatgraph $G$ to a symplectic chord diagram $C$  with corresponding geometric  $H$-basis given  by $\mc{B}$.  Once we have obtained such an algorithm, the theorem will follow analogously to the proof of Theorem \ref{thm:mclift}:  we compare the results of the algorithm for the marked fatgraphs $G$ and $G'$ which differ by a Whitehead move, and the difference in marking defines an element of $\mc{I}(\S1g)$.

 By applying the branch reduction and chord slide algorithms, we can assume that $G$ is a symplectic chord diagram and that $\mc{B}=\{-A_i,B_i\}$ corresponds to the symplectic basis $\{A_i,B_i\}$.  The algorithm then proceeds as follows.  First, we apply Lemma \ref{lemA4} using $v=B_{2g}$ to obtain a new symplectic chord diagram with leftmost chord labelled by $B_{2g}$.  It is easy to see that the unique chord overlapping with the leftmost one must be labelled by $-A_{2g}+kB_{2g}$, and by applying $k$ moves of type $i$,
we can arrange that the labeling is precisely $-A_i$.  We then apply this procedure to the genus $g-1$ symplectic chord diagram  subgraph with $v=B_{2g-1}$, and so on, until we arrive at a symplectic chord diagram  with geometric basis $\mc{B}$.  The naturality statement is a tautology tantamount to the existence of the algorithm.

The {proof of Theorem \ref{thm:Llift}} is similar and only sketched here.  The proof follows from an algorithm which takes any marked fatgraph $G$ to a symplectic  chord diagram $C$ with the property that the  Lagrangian subspace $\Lambda$ equals the span   of the $H$-markings of those chords of $C$ corresponding to the chords labelled $b_i$ produced in the chord slide algorithm.  
 The only truly new ingredient is the determination of a vector $v\in \Lambda$ in the application of Lemma \ref{lemA4}.  This is done by looking at the integral subspaces
$$\aligned
W_{2i-1} &=\Lambda \cap \textrm{span}(A_1,B_1, A_2,  \ldots, A_i),\\
W_{2i} &=  \Lambda \cap \textrm{span}(A_1,B_1, A_2,  \ldots, B_i).\\
\endaligned$$
For the minimal $j$ with $W_j$ nonempty,  the intersection is one-dimensional, hence contains a unique integral basis element $v\in W_j$.

\bibliographystyle{amsplain}

\end{document}